\def\Ih{\mathfrak{h}}
\newtheorem{definition}{Definition}[section]
\theoremstyle{definition}
\newtheorem{remark}[definition]{Remark}
\theoremstyle{definition}
\newtheorem{proposition}[definition]{Proposition}
\theoremstyle{definition}
\theoremstyle{definition}
\newtheorem{theorem}[definition]{Theorem}
\theoremstyle{definition}
\newtheorem{corollary}[definition]{Corollary}
\theoremstyle{definition}
\begin{document}

\title{Ext groups in the category of bimodules over a simple Leibniz algebra} 

\author{Jean Mugniery}
\address{Laboratoire de Mathématiques, Université Louvain-la-Neuve,
Chemin du Cyclotron, 1348 Louvain-la-Neuve, Belgique}
\email{jean.mugniery@uclouvain.be}

\author{Friedrich Wagemann}
\address{Laboratoire de math\'ematiques Jean Leray, UMR 6629 du CNRS, Universit\'e
de Nantes, 2, rue de la Houssini\`ere, F-44322 Nantes Cedex 3, France}
\email{wagemann@math.univ-nantes.fr}

\maketitle

\subjclass[2010]{Primary 17A32; Secondary 17B56}

\keywords{Leibniz cohomology, Chevalley-Eilenberg cohomology, change of rings spectral sequence, cohomology vanishing, semi-simple Leibniz algebra, Whitehead theorem, Ext category of Leibniz bimodules}

\begin{abstract}
In this article, we generalize Loday and Pirashvili's \cite{LP} computation of the $Ext$-category of Leibniz bimodules for a simple Lie algebra to the case of a simple (non Lie) Leibniz algebra. Most of the arguments generalize easily, while the main new ingredient is the Feldvoss-Wagemann's cohomology vanishing theorem for semi-simple Leibniz algebras.   
\end{abstract}

\section*{Introduction}

The goal of this article is to present a new result in the theory of representations of Leibniz algebras, namely to compute Ext groups between finite dimensional simple bimodules of a simple (non Lie) Leibniz algebra over a field of characteristic $0$. Leibniz algebras are a generalization of Lie algebras, discovered by A. Bloh in the 1960s, where one does not require the bracket to be antisymmetric. They were rediscovered by J.-L. Loday \cite{L} in the 1990s when, while trying to lift the boundary operator of Chevalley-Eilenberg homology from $d : \Lambda^{n} \mathfrak{g} \longrightarrow \Lambda^{n-1} \mathfrak{g}$ to $\tilde{d} : \mathfrak{g}^{\otimes n} \longrightarrow \mathfrak{g}^{\otimes n-1}$, he noticed that the only property needed to show that $\tilde{d} \circ \tilde{d} = 0$ was the Leibniz identity of the bracket, that is:
\begin{equation*}
\left[ x, \left[ y, z\right] \right] = \left[ \left[ x, y \right] , z\right] + \left[ y, \left[ x, z \right] \right] \,\,\,\,\forall x, y, z \in \mathfrak{g}
\end{equation*}
\noindent
(see \cite{L} for a survey of the subject).

J.-L. Loday and T. Pirashvili studied the Leibniz representations of semi-simple Lie algebras in \cite{LP}, and established in that paper the following theorem (Theorem 3.1):

\begin{theorem} \label{thm3.1}
Let $\mathfrak{g}$ be a finite dimensional simple Lie algebra over a field of characteristic $0$, and let $L(\mathfrak{g})$ denote the category of finite dimensional Leibniz representations of $\mathfrak{g}$.
The simple objects in $L(\mathfrak{g})$ are exactly the representations of the form $M^{a}$ and $N^{s}$, where $M$, and $N$ are simple right $\mathfrak{g}$-modules.
All groups $Ext^{2}_{UL(\mathfrak{g})}(M, N)$ between simple finite dimensional representations $M$, $N$ are zero, except $Ext^{2}_{UL(\mathfrak{g})}(\mathfrak{g}^{s}, \mathfrak{g}^{a})$ which is one-dimensional. Moreover, 
\begin{equation*}
Ext^{1}_{UL(\mathfrak{g})}(M^{s}, N^{a}) \simeq Hom_{U(\mathfrak{g})}(M, \hat{N})
\end{equation*}
where
\begin{equation*}
\hat{N} = Coker( h : N \longrightarrow Hom(\mathfrak{g}, N)), h(n)(x) = [n, x]
\end{equation*}
and all other groups $Ext^{1}_{UL(\mathfrak{g})}(M, N)$ between simple finite dimensional representations $M$, $N$ are zero.
\end{theorem}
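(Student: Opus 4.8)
The plan is to run the computation in four stages: classify the simple objects, rewrite the Ext groups as Leibniz cohomology, collapse that cohomology onto Chevalley--Eilenberg cohomology via a change of rings argument, and finally extract the two exceptional groups by invariant theory. For the classification I would start from the bimodule axiom obtained from the Leibniz identity with two arguments in $\mathfrak{g}$ and one in the module. Writing $\lambda_x$ and $\rho_x$ for the left and right actions of $x \in \mathfrak{g}$, this axiom forces $\rho_y \circ (\lambda_x + \rho_x) = 0$ for all $x,y$. Hence $V := \sum_x \mathrm{im}(\lambda_x + \rho_x)$ is a sub-bimodule on which the right action vanishes, i.e. an antisymmetric bimodule (an honest Lie module). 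If $M$ is simple then $V = 0$ or $V = M$: in the first case $\lambda_x = -\rho_x$ and $M$ is symmetric, in the second the right action is identically zero and $M$ is antisymmetric. Using that $\mathfrak{g}$ is perfect ($[\mathfrak{g},\mathfrak{g}]=\mathfrak{g}$), the underlying Lie module is simple in either case, so $M \cong M'^{a}$ or $M \cong M'^{s}$ for a simple right $\mathfrak{g}$-module $M'$, which is the first assertion.

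Next I would identify $Ext^{n}_{UL(\mathfrak{g})}(A,B) \cong HL^{n}(\mathfrak{g}, Hom(A,B))$, the Leibniz cohomology with coefficients in the bimodule of linear maps, so that everything reduces to the Leibniz cohomology of the four coefficient bimodules attached to the pairs $(a,a)$, $(s,s)$, $(a,s)$, $(s,a)$. The engine is then the change of rings spectral sequence for the algebra surjection $UL(\mathfrak{g}) \twoheadrightarrow U(\mathfrak{g})$ (dually, for the inclusion of the left action $U(\mathfrak{g}) \hookrightarrow UL(\mathfrak{g})$), whose $E_2$-page has the shape $E_2^{p,q} = H^{p}(\mathfrak{g}, T^{q})$ with $H^{p}$ the Chevalley--Eilenberg cohomology and $T^{q}$ certain fibre cohomology groups built from $A$, $B$ and the adjoint module, with $T^{0} = Hom(M,N)$, the group $T^{1}$ governed by $Hom(\mathfrak{g},N)$, and $T^{2}$ governed by $Hom(\Lambda^{2}\mathfrak{g},\mathfrak{g})$ in the relevant case. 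Here Whitehead's first and second lemmas enter: since $\mathfrak{g}$ is semisimple and all coefficients are finite dimensional, $H^{1}(\mathfrak{g},-) = H^{2}(\mathfrak{g},-) = 0$, so the columns $p = 1,2$ of the $E_2$-page die, and only the invariants row $p=0$ can contribute to $Ext^{1}$ and $Ext^{2}$.

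The two surviving entries produce exactly the exceptional groups. In total degree $1$ the only possible survivor is $E_2^{0,1} = H^{0}(\mathfrak{g}, Hom(M,\hat N)) = Hom_{U(\mathfrak{g})}(M,\hat N)$, where $\hat N$ is the fibre cohomology in the $q$-direction, namely $\hat N = Coker(h\colon N \to Hom(\mathfrak{g},N))$ with $h(n)(x) = [n,x]$; a short check with the bimodule structures shows this $q=1$ term vanishes for the pairs $(a,a)$, $(s,s)$, $(a,s)$ and equals $Hom(M,\hat N)$ only for the mixed pair $(M^{s},N^{a})$, giving the stated isomorphism. In total degree $2$ the only possible survivor is $E_2^{0,2} = H^{0}(\mathfrak{g}, T^{2}) = (T^{2})^{\mathfrak{g}}$, and for the pair $(\mathfrak{g}^{s},\mathfrak{g}^{a})$ the leading contribution is $Hom_{\mathfrak{g}}(\Lambda^{2}\mathfrak{g},\mathfrak{g})$; since the adjoint module occurs in $\Lambda^{2}\mathfrak{g}$ with multiplicity one for a simple $\mathfrak{g}$, this space is one dimensional, spanned by the bracket. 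Equivalently, via the Killing form it is identified with $(\Lambda^{3}\mathfrak{g}^{*})^{\mathfrak{g}} = H^{3}(\mathfrak{g},k) \cong k$, the line of the Cartan $3$-form, and the generator is represented by the structure constants (the bracket) of $\mathfrak{g}$.

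The hard part, and the least formal step, will be this final stage. I expect the real work to lie in pinning down the fibre terms $T^{q}$ from the precise structure of $UL(\mathfrak{g})$ and in checking that the two relevant $E_2$-entries genuinely survive to $E_\infty$, i.e. that the $d_2$ differentials into and out of them vanish, which again rests on the Whitehead vanishing in the neighbouring columns. In particular one must show that the $q=1$ coefficient is cut down from all of $Hom(\mathfrak{g},N)$ to the cokernel $\hat N$, and, most delicately, that the degree-two survivor is reduced to the single invariant bracket, so that $Ext^{2}(M^{s},N^{a})$ vanishes for every pair of simples other than $(\mathfrak{g}^{s},\mathfrak{g}^{a})$ and is exactly one dimensional there; this last vanishing, together with the identification of the generator with the fundamental class $H^{3}(\mathfrak{g},k)$, is where the representation theory of the simple Lie algebra $\mathfrak{g}$ carries the argument.
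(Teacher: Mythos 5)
Your first stage (classification of the simple objects) is correct and is essentially the paper's argument: your $V=\sum_x \mathrm{im}(\lambda_x+\rho_x)$ is the sub-bimodule $M_0$ of the paper's exact sequence (3), and the relation $\rho_y\circ(\lambda_x+\rho_x)=0$ is exactly the relation $r_y\otimes(l_x+r_x)$ in $UL(\mathfrak{g})$. But your second stage contains a fatal error: the identification $Ext^{n}_{UL(\mathfrak{g})}(A,B)\simeq HL^{n}(\mathfrak{g},Hom(A,B))$ is false. $UL(\mathfrak{g})$ is not a Hopf algebra in the relevant sense, $Hom(A,B)$ carries no bimodule structure making this work, and the claim refutes itself: by the Ntolo--Pirashvili vanishing theorem (Theorem \ref{fw} of the paper, in Leibniz generality) one has $HL^{n}(\mathfrak{g},X)=0$ for $n\geq 2$ and \emph{every} finite-dimensional bimodule $X$, so your reduction would force $Ext^{2}_{UL(\mathfrak{g})}(\mathfrak{g}^{s},\mathfrak{g}^{a})=0$, contradicting the very theorem you are proving; likewise it would kill $Ext^{3}_{UL(\mathfrak{g})}(M^{s},M^{s})\simeq H^{3}(\mathfrak{g},k)\neq 0$. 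The true relation is one-sided: $Ext^{*}_{UL(\mathfrak{g})}(U(\mathfrak{g})^{a},X)\simeq HL^{*}(\mathfrak{g},X)$ (Loday--Pirashvili, Theorem (3.4) of \cite{loday}), and the computation must fix the target $X$ and run separate change-of-rings spectral sequences according to whether the \emph{source} is antisymmetric or symmetric, as in \Cref{sspec}.

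This error propagates into the part you yourself flag as "the hard part": your fibre terms $T^{q}$ are never identified, and identifying them is the actual content of the proof. The engine is the degree-shift isomorphism (\Cref{prop23}, i.e.\ Proposition 2.3 of \cite{LP}): from the short exact sequence $0\to U(\mathfrak{g})^{a}\otimes\mathfrak{g}\to UL(\mathfrak{g})\to U(\mathfrak{g})^{s}\to 0$ one gets $Ext^{i+1}_{UL(\mathfrak{g})}(U(\mathfrak{g})^{s},X)\simeq Hom(\mathfrak{g},HL^{i}(\mathfrak{g},X))$ for $i>0$, with $Coker(f)$ and $Ker(f)$ in low degrees. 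This shift is precisely what creates both exceptional groups, and your proposal has no substitute for it. In particular the degree-two survivor is \emph{not} governed by $Hom(\Lambda^{2}\mathfrak{g},\mathfrak{g})$ or the Cartan three-form: it is $E_{2}^{0,2}\simeq Hom_{U(\mathfrak{g})}\bigl(M,Hom(\mathfrak{g},Hom_{U(\mathfrak{g})}(\mathfrak{g},N))\bigr)\simeq Hom_{U(\mathfrak{g})}(M,\mathfrak{g}^{\star})\otimes Hom_{U(\mathfrak{g})}(\mathfrak{g},N)$, which Schur's lemma together with the Killing-form identification $\mathfrak{g}\simeq\mathfrak{g}^{\star}$ shows is one-dimensional exactly for $M\simeq N\simeq\mathfrak{g}$ and zero for all other pairs of simples --- a vanishing statement your $\Lambda^{2}$-guess cannot deliver. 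Finally, your vanishing inputs are insufficient: Whitehead's lemmas control only the base ($p$) direction of the $E_{2}$-page, whereas killing the fibre direction ($HL^{q}(\mathfrak{g},X)=0$ for $q\geq 2$, and for $q\geq 1$ when $X$ is symmetric) is a genuinely separate theorem of Ntolo and Pirashvili (\Cref{fw} here), not a consequence of Whitehead; without it the columns $q\geq 2$ (resp.\ $q\geq 3$ after the shift) of your spectral sequences are uncontrolled and the computation does not close.
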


This theorem shows in particular that, contrary to the representations of semi-simple Lie algebras, the category of Leibniz bimodules of a semi-simple Lie algebra is not semi-simple.

The aim of our article is to generalize this result to Leibniz representations of simple (non Lie) Leibniz algebras by closely following \cite{LP}, and making the adequate changes whenever necessary. The key in doing so is a theorem of J. Feldvoss and F. Wagemann, namely Theorem 4.2 of \cite{FelWag}, assuring the vanishing of Leibniz cohomology needed in the proof of our main theorem:

\begin{theorem} \label{fw}
Let $\Ih$ be a finite-dimensional semisimple left Leibniz algebra over a field of characteristic zero, and let $M$ be a finite-dimensional $\Ih$-bimodule. Then $HL^{n}(\Ih, M) = 0$ for every integer $n \geq 2$, and if $M$ is symmetric, then $HL^{n}(\Ih, M) = 0$ for every integer $n \geq 1$.
\end{theorem}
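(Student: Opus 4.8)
The plan is to reduce the statement to the classical Whitehead lemmas for the semisimple Lie algebra $\mathfrak{g} := \Ih/\mathrm{Leib}(\Ih)$, where $\mathrm{Leib}(\Ih)$ denotes the Leibniz kernel (the ideal generated by the squares $[x,x]$). Two structural facts drive everything. First, in a left Leibniz algebra one has $L_{[x,y]} = [L_x, L_y]$ for the left multiplications $L_x = [x,-]$, hence $L_{[x,x]} = 0$, so $\mathrm{Leib}(\Ih)$ lies in the left annihilator; consequently $\mathrm{Leib}(\Ih)$ is an abelian ideal and the left action of $\Ih$ on any bimodule $M$ factors through the semisimple Lie algebra $\mathfrak{g}$. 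Second, by the Levi--Mal'cev decomposition for Leibniz algebras (Barnes) one may write $\Ih = \mathfrak{s} \ltimes \mathrm{Leib}(\Ih)$ with $\mathfrak{s} \cong \mathfrak{g}$ a semisimple Lie subalgebra, and semisimplicity of $\mathfrak{g}$ provides a Casimir element $c \in U(\mathfrak{g})$ acting on $M$ through the left action.

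First I would reduce the coefficients to simple bimodules: since $M$ is finite dimensional it admits a composition series of sub-bimodules, and the long exact sequence in Leibniz cohomology attached to each short exact sequence propagates the vanishing from the simple subquotients up to $M$. For the symmetric statement one uses that the class of symmetric bimodules (those with $[m,x] = -[x,m]$) is closed under subobjects and quotients, so the composition factors remain symmetric. This isolates the problem to the simple bimodules, which by the structure theory are built from simple $\mathfrak{g}$-modules together with one of the two standard Leibniz structures (the symmetric and antisymmetric types of \cite{LP}).

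I would then take as the engine a Casimir/homotopy argument. Leibniz cochains $CL^n(\Ih, M) = \mathrm{Hom}(\Ih^{\otimes n}, M)$ carry a Cartan calculus, and the Casimir operator $\Theta = \sum_i L_{e^i} L_{e_i}$ built from a dual basis of $\mathfrak{s}$ is cochain-homotopic, via a homotopy formula $L_x = d\, i_x + i_x\, d$, to the operator induced by $c$ on the coefficients. On the sum of the non-trivial $\mathfrak{g}$-isotypic components of $M$ this operator is invertible, forcing the cohomology there to vanish in every positive degree, so that all cohomology is concentrated on the left-invariants $M^{\mathfrak{g}}$. Equivalently, and perhaps more cleanly, one may run the Hochschild--Serre/Pirashvili spectral sequence for the abelian ideal $\mathrm{Leib}(\Ih)$, whose $E_2$-page is the Leibniz cohomology of the semisimple Lie algebra $\mathfrak{g}$ with coefficients in the $\mathrm{Leib}(\Ih)$-cohomology of $M$, and then invoke the known vanishing for semisimple Lie algebras (Ntolo, Pirashvili): $HL^n(\mathfrak{g}, -) = 0$ for $n \geq 2$, and for $n \geq 1$ on symmetric coefficients.

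The main obstacle is the low-degree analysis, which is exactly where the non-Lie nature of $\Ih$ and the $n \geq 2$ versus $n \geq 1$ dichotomy live. The Casimir argument kills everything outside $M^{\mathfrak{g}}$, but on the invariant part the residual contribution is governed by the right action and by $\mathrm{Leib}(\Ih)$, and this can produce a genuine class in degree $1$ (the source of the non-vanishing $Ext^1$ and $Ext^2$ in Theorem \ref{thm3.1}); one must show that it never survives into degree $\geq 2$, and that it disappears entirely when $M$ is symmetric. Making this precise requires either checking that the Casimir eigenspace decomposition is compatible with the full bimodule structure --- which is not automatic, since the right action need not commute with $c$ --- or, preferably, tracking the differentials of the spectral sequence carefully so as to bound the surviving classes by the degree-one term, after which the higher Whitehead lemmas close the argument.
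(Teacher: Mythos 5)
First, a point of order: the paper does not prove \Cref{fw} at all --- it is imported verbatim as Theorem 4.2 of \cite{FelWag} --- so there is no internal proof to compare against, and your sketch must stand as a self-contained argument. As such it has a genuine gap at its central step, together with one intermediate claim that is actually false. Your reductions are fine: $\mathfrak{Leib}(\Ih)$ does lie in the left annihilator, the left action on any bimodule factors through $\Ih_{Lie}$, a composition series plus the long exact sequence reduces to simple coefficients, and symmetry passes to subquotients. But the engine fails, and not only ``in low degrees'' as you hedge: the assertion that the Casimir homotopy forces all positive-degree cohomology to concentrate on the left invariants $M^{\mathfrak{g}}$ is false. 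Already for a simple Lie algebra $\mathfrak{g}$ one has $HL^{1}(\mathfrak{g}, \mathfrak{g}^{a}) = Hom_{\mathfrak{g}}(\mathfrak{g}, \mathfrak{g}) = k \neq 0$ although $\mathfrak{g}^{\mathfrak{g}} = 0$; in the present setting $HL^{1}(\Ih, N^{a}) \simeq Hom_{U(\Ih_{Lie})}(\Ih, N)$ (Lemma 1.5 of \cite{FelWag}, used in Case 3 of the proof of \Cref{main}), which is nonzero for $N \simeq \mathfrak{Leib}(\Ih)$ or $N \simeq \Ih_{Lie}$ --- and this nonvanishing is precisely the source of the nontrivial $Ext^{1}$ and $Ext^{2}$ groups in \Cref{main}. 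The obstruction you half-identify is fatal to the method in every degree, not just degree one: the term $(-1)^{n-1}[\omega(x_{0}, \dots, x_{n-1}), x_{n}]_{R}$ of $dL^{n}$ is not equivariant for the $\Ih_{Lie}$-module structure on $CL^{n}(\Ih, M)$ used in the isotypic decomposition; equivalently, the lift $\sum_{i} l_{e_{i}} l_{e^{i}}$ of the Casimir element to $UL(\Ih)$ is not central there, since $l_{x} r_{y} - r_{y} l_{x} = r_{[x,y]}$ prevents it from commuting with the $r_{y}$, so the complex does not split into Casimir eigen-subcomplexes and the annihilation trick for $Ext^{*}_{UL(\Ih)}(U(\Ih_{Lie})^{a}, M)$ collapses. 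There is no intermediate form of the Casimir argument that kills degrees $\geq 2$ while sparing degree $1$; the dichotomy in \Cref{fw} cannot be recovered by running this argument ``more carefully''.

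Your fallback route is likewise unsubstantiated. A Hochschild--Serre spectral sequence for the ideal $\mathfrak{Leib}(\Ih)$ with $E_{2}^{pq} = HL^{p}(\Ih_{Lie}, HL^{q}(\mathfrak{Leib}(\Ih), M))$ is not an off-the-shelf tool: Pirashvili's spectral sequence is of a different nature (it compares Leibniz with Chevalley--Eilenberg cohomology of a \emph{Lie} algebra through the relative theory), and the classical Hochschild--Serre filtration does not transfer naively to the Loday complex, whose cochains are plain multilinear rather than alternating; you would first have to construct this spectral sequence and identify its $E_{2}$-term, a substantial problem in itself and one that \cite{FelWag} does not need. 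Since the entire $n \geq 2$ versus $n \geq 1$ dichotomy --- the actual content of the theorem --- is deferred to ``tracking the differentials'' of this unverified machinery, what you have is a plausible plan, not a proof. For comparison, the argument in \cite{FelWag} takes neither of your routes: it proceeds by explicit low-degree analysis of the Loday complex (descriptions of $HL^{0}$ and $HL^{1}$, such as the Lemma 1.5 quoted above) combined with degree-shifting isomorphisms of the type $HL^{n+1}(\Ih, M) \simeq HL^{n}(\Ih, Hom(\Ih, M))$ for a suitable bimodule structure on $Hom(\Ih, M)$ --- the same currying phenomenon that produces the shift in \Cref{prop23} --- so that in the end only the two classical Whitehead lemmas for the semisimple Lie algebra $\Ih_{Lie}$ are invoked. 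That is the same endpoint you aim for, but it is reached without any Casimir calculus on Leibniz cochains and without a Hochschild--Serre sequence for the Leibniz kernel, and it is exactly the shift mechanism, not an isotypic decomposition, that accounts for the $n \geq 2$ versus $n \geq 1$ dichotomy.
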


Interestingly, this result represents a continuation of other vanishing theorems. First Whitehead's Theorem giving the vanishing of Chevalley-Eilenberg cohomology of a semi-simple Lie algebra with values in a finite dimensional $\mathfrak{g}$-module whose invariants are trivial. Then P. Ntolo in \cite{N} and T. Pirashvili in \cite{P} independently proved results about Leibniz (co)homology of Lie algebras, while the authors of \cite{FelWag} proved the vanishing of Leibniz cohomology of semi-simple Leibniz algebras.

This allows us to prove the following theorem which is the main theorem of this article. Recall that  $\mathfrak{Leib(h)}$ denotes the two-sided ideal generated by the elements $\left[ x, x \right]$ for $x \in \Ih$ and  that ${\mathfrak h}_{\rm Lie}:={\mathfrak h}/\mathfrak{Leib(h)}$ denotes the canonical quotient Lie algebra associated to a Leibniz algebra ${\mathfrak h}$. Both of them are in particular left $U{\mathfrak h}_{\rm Lie}$-modules and $\mathfrak{Leib(h)}^{\star}$ and $\Ih_{Lie}^{\star}$ are the corresponding dual left modules.  

\begin{theorem}
Let $\Ih$ be a finite dimensional simple Leibniz algebra over a field of characteristic zero $k$. All groups $Ext_{UL(\Ih)}^{2}(M, N)$ between simple finite dimensional $\Ih$-bimodules are zero, except $Ext^{2}_{UL(\Ih)}(\mathfrak{M}^{s}, \mathfrak{N}^{a})$, with $\mathfrak{M} \in \{ \mathfrak{Leib(h)}^{\star}, \Ih_{Lie}^{\star} \}$ and $\mathfrak{N} \in \{ \mathfrak{Leib(h)}, \Ih_{Lie} \}$  which is one dimensional.\\
Moreover, we have that:
\begin{itemize}
\item $Ext^{1}_{UL(\Ih)}(\mathfrak{M}^{s}, k)$, and $Ext^{1}_{UL(\Ih)}(k, \mathfrak{N}^{a})$ are one dimensional, for $\mathfrak{M}\in \{ \mathfrak{Leib(h)}^{\star}, \Ih_{Lie}^\star \}$ and $\mathfrak{N} \in \{ \mathfrak{Leib(h)}, \Ih_{Lie} \}$;
\item $Ext^{1}_{UL(\Ih)}(M^{s}, N^{a}) \simeq Hom_{U(\Ih_{Lie})}(M, \widehat{N})$, where
\begin{equation*}
\widehat{N} := Coker(h:\; \; N \longrightarrow Hom(\Ih, N))\; \; \; \; h(n)(x):= \left[ x, n\right]_{L}
\end{equation*}
\item All other groups $Ext^{1}_{UL(\Ih)}(M, N)$ between simple finite-dimensional $\Ih$-bimodules $M$ and $N$ are zero.
\end{itemize}  
\end{theorem}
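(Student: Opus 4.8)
The plan is to transpose the architecture of \cite{LP} almost verbatim, substituting the Feldvoss--Wagemann vanishing theorem (Theorem \ref{fw}) for every appeal to Whitehead's lemmas, and paying attention to the one structural feature that genuinely changes: since $\Ih$ carries no invariant pairing identifying it with $\Ih^{\star}$, the adjoint bimodule $\Ih$ is a non-trivial extension rather than a symmetric one, and its two $\Ih_{\rm Lie}$-constituents $\mathfrak{Leib(h)}$ and $\Ih_{\rm Lie}$ must be followed separately throughout.

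First I would reprove the classification of simple bimodules. The bimodule Leibniz identity forces $\mathfrak{Leib(h)}$ to act trivially on the left of any bimodule (already $[[x,x],m]=[x,[x,m]]-[x,[x,m]]=0$), so on a simple bimodule $M$ the right action either vanishes (the antisymmetric case $M^{a}$) or equals the negative of the left action (the symmetric case $M^{s}$); in both cases the two actions factor through $\Ih_{\rm Lie}$ and $M$ comes from a simple right $\Ih_{\rm Lie}$-module, exactly as in \cite{LP}. Because $\Ih$ is simple, $\Ih_{\rm Lie}$ is a simple Lie algebra and the sequence $0 \to \mathfrak{Leib(h)} \to \Ih \to \Ih_{\rm Lie} \to 0$ of $\Ih_{\rm Lie}$-modules exhibits $\mathfrak{Leib(h)}$ and $\Ih_{\rm Lie}$ as its simple constituents, which are precisely the modules named in the statement.

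Next I would split the computation of $Ext^{n}_{UL(\Ih)}(X,Y)$ for simple $X,Y$ according to the four symmetry types. For the pairs $(M^{a},N^{a})$, $(M^{s},N^{s})$ and $(M^{a},N^{s})$ the relevant coefficients are symmetric, so a change-of-rings spectral sequence relating $Ext_{UL(\Ih)}$ to the Chevalley--Eilenberg cohomology of $\Ih_{\rm Lie}$, together with the symmetric clause of Theorem \ref{fw} (or, over $\Ih_{\rm Lie}$, Whitehead) makes $Ext^{1}$ and $Ext^{2}$ vanish. The whole content sits in the cross term $(M^{s},N^{a})$, where the functors $(-)^{s}$ and $(-)^{a}$ diverge and the Leibniz structure becomes visible. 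Here I would write down, as in \cite{LP}, the short exact sequence of bimodules relating $N^{a}$ and $N^{s}$ through $Hom(\Ih,N)$; its connecting homomorphism is exactly the map $h(n)(x)=[x,n]_{L}$, whence $Ext^{1}(M^{s},N^{a}) \simeq Hom_{U(\Ih_{\rm Lie})}(M,\widehat{N})$ with $\widehat{N}=Coker(h)$, and the degenerate cases $N=k$ and $M=k$ produce the one-dimensional groups $Ext^{1}(\mathfrak{M}^{s},k)$ and $Ext^{1}(k,\mathfrak{N}^{a})$.

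The degree-two exceptional class is the main obstacle. I would obtain it by applying the two-extension construction of \cite{LP} to the adjoint bimodule $\Ih$ itself and then decomposing along $0 \to \mathfrak{Leib(h)} \to \Ih \to \Ih_{\rm Lie} \to 0$ and its dual: each choice of a constituent $\mathfrak{N} \in \{\mathfrak{Leib(h)},\Ih_{\rm Lie}\}$ in the antisymmetric slot and of a constituent $\mathfrak{M}^{\star} \in \{\mathfrak{Leib(h)},\Ih_{\rm Lie}\}$ in the symmetric slot yields a one-dimensional $Ext^{2}(\mathfrak{M}^{s},\mathfrak{N}^{a})$, the single Loday--Pirashvili class for $\Ih_{\rm Lie}$ spreading over the four pairs. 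The hard part is twofold: checking that these four classes are non-zero and exactly one-dimensional (a Schur-type multiplicity-one count over the simple Lie algebra $\Ih_{\rm Lie}$, reflecting the one-dimensionality of $H^{3}(\Ih_{\rm Lie},k)$ surfacing with a degree shift), and confirming that no other $Ext^{2}$ survives. The latter is exactly where Theorem \ref{fw} is indispensable: without the self-duality of $\Ih$ one cannot invoke Whitehead for the genuinely non-symmetric coefficient bimodules, and only the $n\geq 2$ clause of Feldvoss--Wagemann guarantees that every remaining second Ext group collapses.
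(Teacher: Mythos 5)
Your overall architecture does match the paper's (the two change-of-rings spectral sequences attached to $d_{0}$ and $d_{1}$, Feldvoss--Wagemann vanishing, Schur-type multiplicity counts over $\Ih_{Lie}$, and the separate tracking of the constituents $\mathfrak{Leib(h)}$ and $\Ih_{Lie}$ of the adjoint bimodule), and your degree-one analysis of the cross term, including the degenerate specializations $M=k$ and $N=k$, is essentially the paper's Cases 3, 6 and 8. But two steps, as written, would fail. First, for the pair $(M^{a},N^{a})$ the coefficient bimodule is \emph{antisymmetric}, so the symmetric clause of Theorem \ref{fw} does not apply: one has $HL^{1}(\Ih,N^{a})\simeq Hom_{U(\Ih_{Lie})}(\Ih,N)$, which is nonzero exactly when $N$ is a constituent of $\Ih$. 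The correct argument (the paper's Case 5) is to check that $HL^{1}(\Ih,N^{a})$ carries the trivial left action, so that $H^{p}(\Ih_{Lie},Hom(M,HL^{q}(\Ih,N^{a})))$ vanishes by Whitehead precisely because $M$ is simple \emph{nontrivial}; this hypothesis is essential, and its failure for $M=k$ is exactly why $Ext^{1}_{UL(\Ih)}(k,\mathfrak{N}^{a})$ survives.

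Second, and more seriously, your mechanism for the exceptional $Ext^{2}$ is wrong. The one-dimensionality does not reflect $H^{3}(\Ih_{Lie},k)$: with the single degree shift, $H^{3}$ feeds total degree $4$ (the term $E^{31}_{2}=H^{3}(\Ih_{Lie},k)\otimes Hom_{U(\Ih_{Lie})}(M,\widehat{N})$), which is why the category has nonvanishing \emph{higher} Ext over $UL(\Ih)$, not why $Ext^{2}$ is nonzero. Moreover, the degree shift itself is never established in your sketch: the paper proves it as \Cref{prop23} (the analogue of Proposition 2.3 of \cite{LP}), giving $Ext^{i+1}_{UL(\Ih)}(U(\Ih_{Lie})^{s},M)\simeq Hom(\Ih,HL^{i}(\Ih,M))$ for $i>0$ with $Coker(f)$ and $Ker(f)$ corrections in low degree, via the short exact sequence $0\to U(\Ih_{Lie})^{a}\otimes\Ih\to UL(\Ih)\to U(\Ih_{Lie})^{s}\to 0$ of $UL(\Ih)$-modules; your ``short exact sequence relating $N^{a}$ and $N^{s}$ through $Hom(\Ih,N)$'' gestures at this but is not a proof, and the low-degree terms (e.g.\ $Ker(h)=0$ for nontrivial $N$, killing the $q=0$ row) are needed. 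Granting \Cref{prop23}, the exceptional classes sit in $E^{02}_{2}=H^{0}\bigl(\Ih_{Lie}, Hom(M, Hom(\Ih, Hom_{U(\Ih_{Lie})}(\Ih,N)))\bigr)$, a \emph{double} Schur count against $\Ih\simeq\mathfrak{Leib(h)}\oplus\Ih_{Lie}$ and $\Ih^{\star}\simeq\mathfrak{Leib(h)}^{\star}\oplus\Ih_{Lie}^{\star}$, which simultaneously yields nonvanishing, one-dimensionality, and the four pairs $(\mathfrak{M},\mathfrak{N})$, since $E^{11}_{2}$ and $E^{20}_{2}$ die ($H^{1}(\Ih_{Lie},k)=H^{2}(\Ih_{Lie},k)=0$) and all differentials touching $E^{02}_{2}$ vanish. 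Your plan to exhibit an explicit two-fold extension and then prove it nontrivial is not needed here and would be harder than the spectral-sequence count; the paper only produces such an extension later, in the corollary comparing $Ext_{L(\mathfrak{h})}$ with $Ext_{UL(\Ih)}$.
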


We see that, when we do not restrict ourselves to Lie algebras, there are more non-trivial Ext groups. Moreover, as a corollary of this theorem one can show that the Ext dimension of the category of finite dimensional bimodules over a semi-simple Leibniz algebra is again $2$.

We work in our article with left Leibniz algebras. All preliminary results about Leibniz algebras are due to Loday and Pirashvili and shown in the framework of right Leibniz algebras. References where the corresponding results are shown for left Leibniz algebras include \cite{C} and \cite{F}.    

\vspace{.5cm}

\noindent{\bf Acknowledgements:}

The authors would like to thank A. Djament for pointing out a simpler way to show that the Ext in the category of finite dimensional bimodules over a simple Leibniz algebra are well defined.

\setcounter{section}{0}
\section{Leibniz Algebras}

In this section, we introduce the objects in which we are interested, as well as some of their basic properties. All of this material is due to Loday and Pirashvili. For more results on Leibniz algebras as non-associative algebras see \cite{F}, and see \cite{loday} for results about their (co)homology.

\begin{definition}
A (left) \textbf{Leibniz algebra} over a field k is a vector space $\Ih$ equipped with a bilinear map:
\[
\left[ -,-\right] : \Ih \times \Ih \longrightarrow \Ih
\]
called Leibniz bracket, that satisfies the (left) Leibniz identity:
\begin{equation}
\left[ x, \left[ y, z\right] \right] = \left[ \left[ x, y \right] , z\right] + \left[ y, \left[ x, z \right] \right] \forall x, y, z \in \Ih
\end{equation}
\end{definition}

With this definition, we see that Leibniz algebras are indeed a generalization of Lie algebras, as it is not difficult to check that if we impose the anticommutativity of the bracket, the Jacobi and Leibniz identities are equivalent. 

\begin{remark}
We can also define a right Leibniz algebra by asking our bracket to satisfy the right Leibniz identity instead: $  \left[ \left[ x, y \right] , z\right] =\left[ \left[ x, z \right] , y\right] +  \left[ x, \left[ y, z\right] \right] $, but we will only be concerned with left Leibniz algebras.
\end{remark}

For every Leibniz algebra $\Ih$, we have a short exact sequence:
\begin{equation}
0 \longrightarrow \mathfrak{Leib(h)} \longrightarrow \Ih \longrightarrow \Ih _{Lie} \longrightarrow 0
\end{equation}
where $\mathfrak{Leib(h)}$ is the $\textit{Leibniz kernel}$ of $\Ih$, that is the two-sided ideal generated by the elements $\left[ x, x \right]$ for $x \in \Ih$; and $\Ih _{Lie} := \Ih / \mathfrak{Leib(h)}$.
$\Ih _{Lie}$ is a Lie algebra, called the \textit{canonical Lie algebra associated to $\Ih$ }

\begin{definition}
A left Leibniz algebra is called simple if 0, $\mathfrak{Leib(h)}$, and $\Ih$ are the only two sided ideals of $\Ih$, and $\mathfrak{Leib(h)} \subsetneqq \left[ \Ih, \Ih \right]$.
\end{definition}

This is not the only definition of simplicity: One can also only require that $0$ and $\Ih$ are the only ideals of $\Ih$, but with this definition, all simple Leibniz algebras are in fact Lie algebras, see the beginning of section 7 of \cite{F}, and the references therein.
We will also need the Proposition 7.2 of \cite{F}, namely:

\begin{proposition}
If $\Ih$ is a simple Leibniz algebra, then $\Ih_{Lie}$ is a simple Lie algebra and $\mathfrak{Leib(h)}$ is  a simple $\Ih_{Lie}$-module.
\end{proposition}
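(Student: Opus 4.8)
The plan is to reduce both assertions to the two-sided ideal structure of $\Ih$ via the canonical projection $\pi : \Ih \to \Ih_{Lie}$, after first pinning down the structure of $\mathfrak{Leib(h)}$.

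First I would prove the structural lemma that $\mathfrak{Leib(h)}$ is exactly the linear span $S$ of the squares $[x,x]$, and that it left-annihilates $\Ih$, i.e. $[\mathfrak{Leib(h)}, \Ih] = 0$. The annihilation comes straight from the left Leibniz identity: substituting $y = x$ yields $[x,[x,z]] = [[x,x],z] + [x,[x,z]]$, whence $[[x,x],z] = 0$ for all $x,z$. To see that $S$ is already a two-sided ideal (so that it coincides with the ideal $\mathfrak{Leib(h)}$ it generates), note that right multiplications land in $S$ trivially since $[S,\Ih]=0$, while for left multiplications one uses that $L_z = [z,-]$ is a derivation together with the polarization identity $[a,b]+[b,a] = [a+b,a+b]-[a,a]-[b,b] \in S$. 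Consequently $[\mathfrak{Leib(h)},\mathfrak{Leib(h)}]=0$ and the residual left action $\bar x \cdot m := [x,m]$ is well defined (independent of the representative of $\bar x$, since $[\mathfrak{Leib(h)},\Ih]=0$), making $\mathfrak{Leib(h)}$ a left $\Ih_{Lie}$-module.

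For the simplicity of $\Ih_{Lie}$, I would set up the order-preserving correspondence between ideals of the Lie algebra $\Ih_{Lie}$ and two-sided ideals $J$ of $\Ih$ with $\mathfrak{Leib(h)} \subseteq J \subseteq \Ih$. The only delicate point is that a Lie ideal $J/\mathfrak{Leib(h)}$ pulls back to a two-sided Leibniz ideal: the left condition $[\Ih, J] \subseteq J$ is immediate, while the right condition uses that in $\Ih_{Lie}$ the bracket is antisymmetric, so $[j,x] \equiv -[x,j] \pmod{\mathfrak{Leib(h)}}$ with $[x,j] \in J$ and $\mathfrak{Leib(h)} \subseteq J$, forcing $[j,x]\in J$. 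By simplicity of $\Ih$ the only such ideals are $\mathfrak{Leib(h)}$ and $\Ih$, giving the ideals $0$ and $\Ih_{Lie}$ of the quotient; moreover $\Ih_{Lie}$ is non-abelian because $[\Ih,\Ih]\not\subseteq\mathfrak{Leib(h)}$ by the standing hypothesis $\mathfrak{Leib(h)}\subsetneq[\Ih,\Ih]$. Hence $\Ih_{Lie}$ is a simple Lie algebra.

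For the simplicity of the module, observe that an $\Ih_{Lie}$-submodule $W \subseteq \mathfrak{Leib(h)}$ is nothing but a subspace stable under the left action $[\Ih, -]$; since $[W, \Ih] \subseteq [\mathfrak{Leib(h)},\Ih] = 0 \subseteq W$ by the structural lemma, $W$ is automatically a two-sided ideal of $\Ih$. Simplicity of $\Ih$ then forces $W \in \{0, \mathfrak{Leib(h)}, \Ih\}$, and since $W \subseteq \mathfrak{Leib(h)} \subsetneq \Ih$ we get $W \in \{0, \mathfrak{Leib(h)}\}$; as $\mathfrak{Leib(h)} \neq 0$ (the algebra being non-Lie), $\mathfrak{Leib(h)}$ is a simple $\Ih_{Lie}$-module. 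The main obstacle is really the structural lemma of the first step — in particular recognizing that the span of squares is already two-sided and left-annihilating — since once $[\mathfrak{Leib(h)}, \Ih]=0$ is available, both simplicity statements collapse to the ideal correspondence under $\pi$ and the hypothesis $\mathfrak{Leib(h)}\subsetneq[\Ih,\Ih]$.
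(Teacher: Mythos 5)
Your proof is correct, and in fact the paper gives no argument of its own here — it simply cites Proposition 7.2 of \cite{F} — while your route (the lemma that $\mathfrak{Leib(h)}$ is exactly the span of the squares and satisfies $[\mathfrak{Leib(h)},\Ih]=0$, then the two-sided-ideal correspondence under $\pi$ together with $\mathfrak{Leib(h)}\subsetneq[\Ih,\Ih]$ to rule out the abelian case) is essentially the standard argument found in that reference. The only caveat is the one you already flag yourself: simplicity of $\mathfrak{Leib(h)}$ as an $\Ih_{Lie}$-module requires $\mathfrak{Leib(h)}\neq 0$, which holds in the paper's intended setting of simple non-Lie Leibniz algebras.
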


We now give the definition of the notion of Leibniz modules and bimodules.

\begin{definition} \label{def}
Let $\Ih$ be a Leibniz algebra. An $\Ih$-bimodule is a vector space $M$ over $k$ equipped with two bilinear maps:
\[ 
\left[ -,-\right] _{L} : \Ih \times M \longrightarrow M
\]
and
\[
\left[ -,-\right] _{R} : M \times \Ih \longrightarrow M
\]
which satisfy the following relations $\forall x, y \in \Ih,  \forall m \in M$:

\begin{equation}
\tag{LLM}
\left[ x, \left[ y, m\right]_{L} \right]_{L} = \left[ \left[ x, y \right] , m\right]_{L} + \left[ y, \left[ x, m \right]_{L} \right]_{L} 
\end{equation}
\begin{equation}
\tag{LML}
\left[ x, \left[ m, y\right]_{R} \right]_{L} = \left[ \left[ x, m \right]_{L} , y\right]_{R} + \left[ m, \left[ x, y \right] \right]_{R} 
\end{equation}
\begin{equation}
\tag{MLL}
\left[ m, \left[ x, y\right] \right]_{R} = \left[ \left[ m, x \right]_{R} , y\right]_{R} + \left[ x, \left[ m, y \right]_{R} \right]_{L} 
\end{equation}
\end{definition}

We define a \textbf{left $\Ih$-module} as being a vector space $M$ over $k$ equipped with a bilinear map:
\[
\left[ -,-\right] _{L} : \Ih \times M \longrightarrow M
\]
satisfying the relation (LLM) of \Cref{def}.

\begin{definition}
Let $\Ih$ be a Leibniz algebra, and $M$ a Leibniz bimodule.\\
If
\[
\left[x, m \right] _{L} = - \left[ m, x \right] _{R} \: \forall x \in \Ih, \forall m \in M
\] then M is said to be \textbf{symmetric} and denoted $M^{s}$.\\
If 
\[ \left[ m, x \right] _{R} = 0 \; \forall x \in \Ih, \forall m \in M
\] then M is said to be \textbf{antisymmetric} and denoted $M^{a}$.\\
If M is both symmetric and antisymmetric, then M is \textbf{trivial}.
\end{definition}

For every $\Ih$-bimodule $M$, there is a short exact sequence of $\Ih$-bimodules:
\begin{equation}
0 \longrightarrow M_{0} \longrightarrow M \longrightarrow M/M_{0} \longrightarrow 0
\end{equation}
where $M_{0} = Span_{k}( \left[ x, m \right]_{L} + \left[ m, x \right]_{R})$, see (1.10) of \cite{loday}. 
Note that by construction $M/M_{0}$ is a symmetric $\Ih$-bimodule, and that $M_{0}$ is an antisymmetric $\Ih$-bimodule. Moreover, if we consider $\Ih$ as an $\Ih$-bimodule using the adjoint action, then the short exact sequences (2) and (3) coincide.

If $M$ is an $\Ih$-bimodule (in fact this works even when $M$ is only a left $\Ih$-module), then it has a natural $\Ih _{Lie}$-module structure (in the Lie sense). Indeed one can define a left action of $\Ih _{Lie}$ as follows:
\[
\begin{split}
\Ih _{Lie} \times M & \longrightarrow M \\
(\bar{x}, m ) & \longmapsto \left[ x, m \right] _{L}
\end{split}
\]
Conversely, if $M$ is an $\Ih_{Lie}$-module, there are two natural ways to see it as an $\Ih$-bimodule. We first see it as a left $\Ih$-module via the projection $\Ih \longrightarrow \Ih_{Lie}$, and then we impose our right action to be either trivial, or to be the opposite of the left action, yielding respectively an antisymmetric bimodule, or a symmetric one.
Knowing this we can state the following Theorem (due to Loday-Pirashvili \cite{LP}):

\begin{theorem} 
The simple objects in the category of $\Ih$-bimodules of finite dimension are exactly the modules of the form $M^{a}$ and $M^{s}$, where $M$ is a finite dimensional simple $\Ih_{Lie}$-module.
\end{theorem}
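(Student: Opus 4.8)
The plan is to prove the two inclusions separately, using as the central bridge the fact that for a symmetric or an antisymmetric bimodule the sub-bimodules are precisely the $\Ih_{Lie}$-submodules of the underlying vector space. Recall from the discussion preceding the statement that the left action of any $\Ih$-bimodule factors through $\Ih_{Lie}$, so that $M$ becomes a genuine (Lie) $\Ih_{Lie}$-module via $\bar{x}\cdot m = [x,m]_L$; the computational heart of this is that setting $y = x$ in the relation (LLM) gives $[[x,x],m]_L = 0$, whence the generators of $\mathfrak{Leib(h)}$, and therefore all of $\mathfrak{Leib(h)}$, act trivially on the left. For an antisymmetric bimodule $M^{a}$ the right action is identically zero, while for a symmetric bimodule $M^{s}$ it is the negative of the left action; in either case a $k$-subspace stable under the left action is automatically stable under the right action. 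Consequently, for such bimodules a subspace is a sub-bimodule if and only if it is an $\Ih_{Lie}$-submodule, and the two simplicity notions coincide.

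First I would verify the easy inclusion: that $M^{a}$ and $M^{s}$ are simple $\Ih$-bimodules whenever $M$ is a simple finite-dimensional $\Ih_{Lie}$-module. By the correspondence just described, every sub-bimodule of $M^{a}$ (resp. $M^{s}$) is an $\Ih_{Lie}$-submodule of $M$, hence is $0$ or $M$ by simplicity of $M$; since $M^{a}$ and $M^{s}$ are nonzero, they are simple.

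For the converse I would take a simple finite-dimensional $\Ih$-bimodule $S$ and consider the canonical short exact sequence $0 \longrightarrow S_{0} \longrightarrow S \longrightarrow S/S_{0} \longrightarrow 0$ of $\Ih$-bimodules, in which $S_{0}$ is antisymmetric and $S/S_{0}$ is symmetric. Since $S_{0}$ is a sub-bimodule of the simple object $S$, either $S_{0} = 0$ or $S_{0} = S$. In the first case $S \cong S/S_{0}$ is symmetric, and in the second case $S = S_{0}$ is antisymmetric; so $S$ is either symmetric or antisymmetric. Invoking the correspondence once more, the simplicity of $S$ as a bimodule is equivalent to the simplicity of its underlying space $M := S$ as an $\Ih_{Lie}$-module, and thus $S = M^{s}$ or $S = M^{a}$ with $M$ a simple $\Ih_{Lie}$-module, as claimed.

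I expect this to be a matter of organizing the structural facts already assembled in this section rather than of any deep new computation. The one point requiring genuine care is the bridge step: checking that the prescribed left and right actions of a symmetric or antisymmetric bimodule really do force the lattice of sub-bimodules to coincide with the lattice of $\Ih_{Lie}$-submodules. This is exactly where both the factorization of the left action through $\Ih_{Lie}$ and the rigidity of the right action (zero, or minus the left action) are essential; everything else then reduces to an application of the short exact sequence $0 \to S_{0} \to S \to S/S_{0} \to 0$ together with the simplicity of $S$.
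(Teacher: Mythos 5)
Your proof is correct and takes essentially the same route the paper indicates, which is that the theorem ``follows easily from the existence of the short exact sequence (3), and the fact that $\mathfrak{Leib(h)}$ acts trivially from the left'': your dichotomy via $S_{0}$ is precisely the use of sequence (3), and your bridge identifying sub-bimodules of $M^{s}$ or $M^{a}$ with $\Ih_{Lie}$-submodules is precisely the factorization of the left action through $\Ih_{Lie}$. The only point worth making explicit is the passage from the generators $[x,x]$ to all of $\mathfrak{Leib(h)}$: the set of elements acting trivially on the left is a two-sided ideal, since (LLM) gives $\lambda_{[x,y]}=\lambda_{x}\lambda_{y}-\lambda_{y}\lambda_{x}$ for the left-action map $\lambda$, so this kernel contains the ideal generated by the squares.
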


The proof follows easily from the existence of the short exact sequence (3), and the fact that $\mathfrak{Leib(h)}$ acts trivially from the left (i.e. is contained in the left center).

We now introduce the notion of the universal enveloping algebra of a Leibniz algebra, see (2.1) of \cite{loday} (but note that the authors work with \textit{right} Leibniz algebras. A reference for left Leibniz algebras is \cite{C}.).

\begin{definition}
Let $\Ih$ be a Leibniz algebra. Given two copies $\Ih^{l}$ and $\Ih^{r}$ of $\Ih$ generated respectively by the elements $l_{x}$ and $r_{x}$ for $x \in \Ih$, we define the universal enveloping algebra of $\Ih$ as the unital associative algebra:
\[ 
UL(\Ih ) := T( \Ih ^{l} \oplus \Ih ^{r} ) / \mathfrak{I}
\]
where $T( \Ih ^{l} \oplus \Ih ^{r} ) := \displaystyle \bigoplus _{n = 0} ^{\infty}( \Ih ^{l} \oplus \Ih ^{r} )^{\otimes n}$ is the tensor algebra of $ \Ih ^{l} \oplus \Ih ^{r}$ and $\mathfrak{I} $ is the two-sided ideal of $ \Ih$ generated by the elements : 
\[
\begin{split}
l_{ \left[x, y\right] } - l_{x} \otimes l_{y} + l_{y} \otimes l_{x} \\
r_{ \left[x, y\right] } - l_{x} \otimes r_{y} + r_{y} \otimes l_{x} \\
r_{y} \otimes (l_{x} + r_{x})
\end{split}
\]
\end{definition}

For a Lie algebra $\mathfrak{g}$, there is an equivalence between being a $\mathfrak{g}$-module and being a $U(\mathfrak{g})$-module, where $U(\mathfrak{g})$ is the universal enveloping algebra of the Lie algebra $\mathfrak{g}$. The following theorem allows us to establish the same kind of connection between the structure of $\Ih$-bimodule and left $UL(\Ih)$-module (due to Loday-Pirashvili \cite{loday}. See also \cite{C} for a proof in the framework of left Leibniz algebras).

\begin{theorem} \label{equi}
Let $\Ih$ be a Leibniz algebra. There is an equivalence of categories between the category of $\Ih$-bimodules and the category of $UL(\Ih)$-modules.
\end{theorem}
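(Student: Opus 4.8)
The plan is to exhibit two mutually inverse functors that are the identity on underlying vector spaces, which in fact yields an isomorphism of categories and in particular the desired equivalence. Starting from an $\Ih$-bimodule $M$ with structure maps $[-,-]_L$ and $[-,-]_R$ as in \Cref{def}, I would use the universal property of the tensor algebra to define an algebra homomorphism $\rho : T(\Ih^l \oplus \Ih^r) \to \mathrm{End}_k(M)$ determined by $\rho(l_x)(m) := [x,m]_L$ and $\rho(r_x)(m) := [m,x]_R$, extended linearly on each copy of $\Ih$ and multiplicatively on tensors. The crux is to show that $\rho$ annihilates the ideal $\mathfrak{I}$, so that it descends to a left $UL(\Ih)$-module structure on $M$; since $\mathfrak{I}$ is two-sided and generated by the three listed elements, it suffices to check that $\rho$ sends each of those three generators to $0$. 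In the reverse direction, from a left $UL(\Ih)$-module $M$ I would set $[x,m]_L := l_x \cdot m$ and $[m,x]_R := r_x \cdot m$; these are bilinear because the action is linear and $x \mapsto l_x$, $x \mapsto r_x$ are linear, and I would verify (LLM), (LML), (MLL) by reading off the defining relations of $UL(\Ih)$.

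The key computation — and the only place where anything delicate happens — is the exact dictionary between the three generators of $\mathfrak{I}$ and the three bimodule axioms, using the convention that in a tensor the rightmost factor acts first. Evaluating $\rho$ on the first generator $l_{[x,y]} - l_x \otimes l_y + l_y \otimes l_x$ at $m$ yields precisely (LLM), and on the second generator $r_{[x,y]} - l_x \otimes r_y + r_y \otimes l_x$ it yields (LML). The third generator $r_y \otimes (l_x + r_x)$ is the subtle one: it does not by itself reproduce (MLL), but instead gives the identity $[[x,m]_L,y]_R + [[m,x]_R,y]_R = 0$. One then checks that this identity together with (LML) is equivalent to (MLL): substituting $[[x,m]_L,y]_R = -[[m,x]_R,y]_R$ into the (LML) relation $[x,[m,y]_R]_L = [[x,m]_L,y]_R + [m,[x,y]]_R$ produces exactly (MLL), and conversely (LML) and (MLL) together force the vanishing recorded by the third generator. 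Thus the three generators are, jointly, equivalent to the three bimodule axioms, which is what makes both constructions well defined.

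Finally I would check functoriality and that the two constructions are mutually inverse. A $k$-linear map $f : M \to N$ is a morphism of $\Ih$-bimodules iff it commutes with $[x,-]_L$ and $[-,x]_R$ for every $x$, i.e. iff it commutes with the operators $\rho(l_x)$ and $\rho(r_x)$; since these generate $UL(\Ih)$ as an algebra, this holds iff $f$ is $UL(\Ih)$-linear, so morphisms correspond under both functors. As the two functors leave the underlying vector space and the underlying linear maps unchanged and invert each other's structure assignments, their composites are the respective identity functors. The main obstacle is thus purely the bookkeeping in the dictionary above — in particular keeping track of the action convention and noticing that (MLL) is encoded only jointly by the second and third generators rather than by a single one; once that is pinned down, the rest is formal.
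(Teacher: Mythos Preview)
Your proof is correct and follows exactly the standard argument; the paper itself does not supply a proof but refers to Theorem (2.3) of \cite{loday} (and \cite{C} for the left version), noting only the dictionary $l_x \leftrightarrow [x,-]_L$, $r_y \leftrightarrow [-,y]_R$, which is precisely what you unpack. Your observation that the third generator $r_y \otimes (l_x + r_x)$ encodes (MLL) only jointly with (LML), rather than on its own, is the one nontrivial bookkeeping point and you handle it correctly.
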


For a proof see Theorem (2.3) of \cite{loday} (note once again that the authors work with \textit{right} Leibniz algebras). This actually tells us, given one of the two structures, how to obtain the other: The action of $l_{x}$ corresponds to the left action $\left[ x, - \right]_{L}$ while the action $r_{y}$ corresponds to the right action $\left[ -, y \right]_{R}$.

Another nice property of this universal enveloping algebra is that we can establish a connection between $U(\Ih _{Lie})$-modules and $UL(\Ih)$-modules. To this end we define the following algebras homomorphisms:
\[
\begin{split}
d_{0} : UL(\Ih ) & \longrightarrow U(\Ih _{Lie}) \\
d_{0} (l_{x} ) & = \bar{x} \\
d_{0} (r_{x} ) & = 0
\end{split}
\]
and: 
\[
\begin{split}
d_{1} : UL(\Ih ) & \longrightarrow U(\Ih _{Lie}) \\
d_{1} (l_{x} ) & = \bar{x} \\
d_{1} (r_{x} ) & = - \bar{x}
\end{split}
\]

With these, given a $U(\Ih _{Lie})$-module, we can view it as a $UL(\Ih)$-module either via $d_{0}$ or via $d_{1}$. The former gives an antisymmetric $\Ih$-bimodule, while the latter gives a symmetric $\Ih$-bimodule. Moreover, since they are surjective (their image contains the generators of $U(\Ih_{Lie})$), this allows us to consider $U(h_{Lie})$ as the quotient $UL(\Ih)/Ker(d_{i})$ for $i \in \{ 0, 1\}$.

Being a generalization of Lie algebras, Leibniz algebras are equiped with a generalization of Chevalley-Eilenberg cohomology, namely Leibniz cohomology which was discovered by Loday. Let $\Ih$ be a Leibniz algebra, and $M$ be an $\Ih$-bimodule. We give the (left version of the) definition of the cochain complex
\begin{equation*}
CL^{n}(\Ih , M), dL^{n} \} _{n \geq 0 }
\end{equation*} 
from \cite{loday} (1.8) (a reference for this left version is \cite{FelWag}), namely:
\[
\begin{split}
CL^{n}(\Ih , M) & = Hom(\Ih ^{\otimes n}, M) \\
dL^{n} : CL^{n}(\Ih, M) & \longrightarrow CL^{n+1}(\Ih, M)
\end{split}
\]
with:
\[
\begin{split}
dL^{n} \omega (x_{0}, ..., x_{n}) = & \sum_{i=0}^{n-1} (-1)^{i} \left[ x_{i}, \omega (x_{0}, ..., \hat{x_{i}}, ..., x_{n}) \right] _{L} + (-1)^{n-1} \left[ \omega (x_{0}, ..., x_{n-1}), x_{n} \right] _{R} \\
& + \sum_{0 \leq i < j \leq n} (-1)^{i+1} \omega (x_{0}, ..., \hat{x_{i}}, ..., x_{j-1}, \left[ x_{i}, x_{j} \right] , x_{j+1}, ..., x_{n} ) 
\end{split}
\]

\begin{definition}
Let $\Ih$ be a Leibniz algebra, and $M$ be an $\Ih$-bimodule. The cohomology of $\Ih$ with coefficients in M is the cohomology of the cochain complex  $\{ CL^{n}(\Ih , M), dL^{n} \} _{n \geq 0 }$.
\[
HL^{n}(\Ih, M) = H^{n}(\{ CL^{n}(\Ih , M), dL^{n} \} _{n \geq 0 }) \; \; \forall n \geq 0
\]
\end{definition}

\begin{remark}
By definition $CL^{0}(\Ih, M) = M$ and $dL^{0}m(x) = -\left[m, x, \right]_{R}$. Therefore, we have:
\[
HL^{0}(\Ih, M) = \{ m \in M, \; \; \left[m, x \right]_{R} = 0 \; \; \forall x \in \Ih \}
\]
This is the submodule of right invariants. Note that if $M$ is antisymmetric, then $HL^{0}(\Ih, M) = M$.
\end{remark}

\section{Ext in the category of Leibniz bimodules}

We are now interested in computing the Ext groups in the category of $\Ih$-bimodules. From now on, we will consider a finite-dimensional left Leibniz algebra $\Ih$ over a field of characteristc zero $k$. The definition of the morphisms $d_{0}$ and $d_{1}$, together with the change of rings spectral sequence constructed in the subsections 1 to 4 of Chapter XVI from \cite{CE}, yield the following spectral sequences:

\begin{align*}
E^{pq}_{2} = Ext_{U(\Ih_{Lie})}^{p}\left( Y, Ext_{UL(\Ih)}^{q}\left(U(\Ih_{Lie})^{a}, X\right) \right) &  \Longrightarrow Ext^{p+q}_{UL(\Ih)}(Y^{a}, X) && \text{(S1)} \\
E^{pq}_{2} = Ext_{U(\Ih_{Lie})}^{p}\left( Z, Ext_{UL(\Ih)}^{q}\left(U(\Ih_{Lie})^{s}, X\right) \right) &  \Longrightarrow Ext^{p+q}_{UL(\Ih)}(Z^{s}, X) && \text{(S2)}
\end{align*}

where $X$ is an $\Ih$-bimodule, and $Y$ and $Z$ are left $\Ih$-modules.\\
For a Lie algebra $\mathfrak{g}$, we have the following isomorphism $Ext_{U(\mathfrak{g})}^{*}(M, N) \simeq H^{*}\left( \mathfrak{g}, Hom(M, N) \right)$, which we can use to rewrite (S1) and (S2) as:

\begin{align*}
E^{pq}_{2} = H^{p}\left(\Ih_{Lie}, Hom(Y, Ext_{UL(\Ih)}^{q}\left(U(\Ih_{Lie})^{a}, X\right) \right) &  \Longrightarrow Ext^{p+q}_{UL(\Ih)}(Y^{a}, X) && \text{(S1)}\\
E^{pq}_{2} = H^{p}\left( \Ih_{Lie}, Hom(Z, Ext_{UL(\Ih)}^{q}\left(U(\Ih_{Lie})^{s}, X\right) \right) &  \Longrightarrow Ext^{p+q}_{UL(\Ih)}(Z^{s}, X) && \text{(S2)}
\end{align*}

Moreover, by Theorem (3.4) of \cite{loday}, we have an isomorphism
\begin{equation*}
Ext^{*}_{UL(\Ih)}(U(\Ih_{Lie})^{a}, X) \simeq HL^{*}(\Ih, X)
\end{equation*}
This isomorphism also holds for the left framework as is easily shown by constructing the left version of the non-commutative Koszul complex of \cite{loday}. 

This gives us the proposition:

\begin{proposition} \label{sspec}
Let $\Ih$ be a Leibniz algebra, let $X$ be an $\Ih$-bimodule, and $Y$ and $Z$ be left $\Ih$-modules. There are two spectral sequences:
\begin{align*}
E^{pq}_{2} = H^{p}\left( \Ih_{Lie}, Hom(Y, HL^{q}(\Ih, X)) \right) &  \Longrightarrow Ext^{p+q}_{UL(\Ih)}(Y^{a}, X) && \text{(S1)} \\
E^{pq}_{2} = H^{p}\left(\Ih_{Lie}, Hom \left( Z, Ext^{q}_{UL(\Ih)}(U(\Ih_{Lie})^{s}, X) \right) \right) &  \Longrightarrow Ext^{p+q}_{UL(\Ih)}(Z^{s}, X) && \text{(S2)}
\end{align*}
\end{proposition}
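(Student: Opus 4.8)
The statement merely packages the three ingredients assembled in the discussion above, so the plan is to record how they fit together. The starting point is that each $d_{i} : UL(\Ih) \longrightarrow U(\Ih_{Lie})$, $i \in \{0,1\}$, is a \emph{surjective} algebra homomorphism, realising $U(\Ih_{Lie})$ as the quotient ring $UL(\Ih)/\ker(d_{i})$. First I would feed this ring surjection into the change of rings spectral sequence of Chapter XVI, subsections 1 to 4, of \cite{CE}: for a $U(\Ih_{Lie})$-module $A$ and a $UL(\Ih)$-module $C$, coinduction $Hom_{UL(\Ih)}(U(\Ih_{Lie}), -)$ along $d_{i}$ is right adjoint to the exact restriction functor, hence preserves injectives, and the Grothendieck spectral sequence of the composite $Hom_{U(\Ih_{Lie})}(A, -) \circ Hom_{UL(\Ih)}(U(\Ih_{Lie}), -)$ yields
\[
E_{2}^{pq} = Ext_{U(\Ih_{Lie})}^{p}\bigl(A, Ext_{UL(\Ih)}^{q}(U(\Ih_{Lie}), C)\bigr) \Longrightarrow Ext_{UL(\Ih)}^{p+q}(A, C).
\]

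Next I would specialise this, keeping track of the bimodule structures induced by $d_{0}$ and $d_{1}$. Taking $i = 0$, $A = Y$ and $C = X$, the discussion following the definitions of $d_{0}, d_{1}$ identifies the restriction of $Y$ along $d_{0}$ with the antisymmetric bimodule $Y^{a}$, and the restriction of $U(\Ih_{Lie})$ along $d_{0}$ with $U(\Ih_{Lie})^{a}$; this turns the display into the raw Ext form of (S1). Running the identical argument with $d_{1}$ and $A = Z$ produces the symmetric version, with $Z^{s}$ and $U(\Ih_{Lie})^{s}$ in place, which is the raw form of (S2).

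I would then rewrite the outer group: since $\Ih_{Lie}$ is a Lie algebra, the standard isomorphism $Ext_{U(\Ih_{Lie})}^{p}(M, N) \simeq H^{p}(\Ih_{Lie}, Hom(M, N))$ recasts $Ext_{U(\Ih_{Lie})}^{p}(Y, -)$ as Chevalley--Eilenberg cohomology $H^{p}(\Ih_{Lie}, Hom(Y, -))$, and likewise for $Z$, giving the cohomological form of both spectral sequences. For (S1) alone I would finally replace the inner coefficient by $HL^{q}(\Ih, X)$ via the isomorphism $Ext_{UL(\Ih)}^{*}(U(\Ih_{Lie})^{a}, X) \simeq HL^{*}(\Ih, X)$ of Theorem (3.4) of \cite{loday}; no comparable description is available for the symmetric coefficient, so (S2) retains $Ext_{UL(\Ih)}^{q}(U(\Ih_{Lie})^{s}, X)$.

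The work here is organisational rather than computational, and I expect the only genuine obstacles to be two transcription points. The first is the bookkeeping just mentioned, verifying precisely that restriction along $d_{0}$ (resp. $d_{1}$) reproduces the antisymmetric (resp. symmetric) bimodule of the equivalence of \Cref{equi}. The second, flagged in the text, is that Theorem (3.4) of \cite{loday} is proved for right Leibniz algebras, so its use in (S1) requires the left-handed version; I would secure this by writing down the left analogue of Loday's non-commutative Koszul resolution of $U(\Ih_{Lie})^{a}$ by free $UL(\Ih)$-modules and checking that applying $Hom_{UL(\Ih)}(-, X)$ recovers the Leibniz cochain complex $CL^{*}(\Ih, X)$.
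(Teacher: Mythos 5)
Your proposal is correct and follows essentially the same route as the paper, which likewise obtains the raw Ext form of (S1) and (S2) from the change of rings spectral sequence of \cite{CE} applied to the surjections $d_{0}$ and $d_{1}$, rewrites the outer term via $Ext_{U(\Ih_{Lie})}^{p}(M,N) \simeq H^{p}(\Ih_{Lie}, Hom(M,N))$, and identifies the inner coefficient in (S1) with $HL^{q}(\Ih, X)$ by Theorem (3.4) of \cite{loday}, checked in the left setting via the left analogue of the non-commutative Koszul complex. Your recasting of the Cartan--Eilenberg sequence as a Grothendieck spectral sequence (coinduction preserving injectives) is only a presentational variant of the same argument, and both transcription points you flag are exactly the ones the paper addresses.
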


In the previous proposition, we were able to identify $Ext^{*}_{UL(\Ih)}(U(\Ih_{Lie})^{a}, X)$ to the Leibniz cohomology $HL^{*}(\Ih, X)$. What about $Ext^{*}_{UL(\Ih)}(U(\Ih_{Lie})^{s}, X)$? The following result will give a generalization of Proposition 2.3 of \cite{LP}, in order to give a relation between $Ext^{*}_{UL(\Ih)}(U(\Ih_{Lie})^{s}, X)$ and Leibniz cohomology. In order to do so, we will have to introduce a shift in the homological degree which will be responsible for nontrivial Ext groups in what will follow.

\begin{proposition} \label{prop23}
Let $\Ih$ be a Leibniz algebra, and $M$ be an $\Ih$-bimodule. There are isomorphisms:
\begin{align*}
Ext_{UL(\Ih)}^{i+1}\left( U(\Ih _{Lie})^{s}, M \right) & \simeq Hom(\Ih, HL^{i}(\Ih, M)) && \text{for i > 0} \\
& \simeq Coker (f) && \text{for i = 0} \\
& \simeq Ker (f) && \text{for i = -1} 
\end{align*}
where $ f : M \longrightarrow Hom(\Ih, HL^{0}(\Ih, M))$ is given by:
\[
f(m)(h) = \left[ h, m \right] _{L} + \left[ m, h \right] _{R} \; \; \; \forall h \in \Ih, \; \forall m \in M
\]
\end{proposition}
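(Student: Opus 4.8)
The plan is to realize $Ext^{*}_{UL(\Ih)}(U(\Ih_{Lie})^{s}, M)$ through an explicit projective (or at least $Ext$-computing) resolution of the symmetric module $U(\Ih_{Lie})^{s}$, and to compare the resulting complex with the Leibniz cochain complex $CL^{\bullet}(\Ih, M)$ up to a degree shift of one. The key structural input is the short exact sequence relating the two copies of the trivial-type modules: since $d_{1} : UL(\Ih) \to U(\Ih_{Lie})$ realizes $U(\Ih_{Lie})^{s}$ as the quotient $UL(\Ih)/\mathrm{Ker}(d_{1})$, I would first identify $\mathrm{Ker}(d_{1})$ as a $UL(\Ih)$-module and fit $U(\Ih_{Lie})^{s}$ into a short exact sequence whose other terms have $Ext$ already computable in terms of Leibniz cohomology via the isomorphism $Ext^{*}_{UL(\Ih)}(U(\Ih_{Lie})^{a}, M) \simeq HL^{*}(\Ih, M)$ recorded before Proposition \ref{sspec}. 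This is exactly the strategy of Proposition 2.3 of \cite{LP}, adapted to the left-module conventions.

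Concretely, I expect a short exact sequence of $UL(\Ih)$-modules of the shape
\begin{equation*}
0 \longrightarrow U(\Ih_{Lie})^{s} \longrightarrow \Ih \otimes U(\Ih_{Lie})^{a} \longrightarrow U(\Ih_{Lie})^{a} \longrightarrow 0,
\end{equation*}
or a variant thereof, in which the middle term is an ``induced'' antisymmetric module whose $Ext$ into $M$ is governed by $Hom(\Ih, HL^{*}(\Ih, M))$ and the outer antisymmetric copies contribute $HL^{*}(\Ih, M)$ directly. Applying $Ext^{*}_{UL(\Ih)}(-, M)$ then produces a long exact sequence
\begin{equation*}
\cdots \longrightarrow HL^{i}(\Ih, M) \longrightarrow Hom(\Ih, HL^{i}(\Ih, M)) \longrightarrow Ext^{i+1}_{UL(\Ih)}(U(\Ih_{Lie})^{s}, M) \longrightarrow HL^{i+1}(\Ih, M) \longrightarrow \cdots
\end{equation*}
The degree shift $i \mapsto i+1$ in the statement is precisely the connecting-homomorphism shift coming from this long exact sequence. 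I would then read off the three cases: for $i > 0$ the outer $HL$ terms should vanish into the comparison or the relevant maps become isomorphisms, leaving $Ext^{i+1} \simeq Hom(\Ih, HL^{i}(\Ih, M))$; for $i = 0$ and $i = -1$ the low-degree tail of the long exact sequence yields $\mathrm{Coker}(f)$ and $\mathrm{Ker}(f)$ respectively, where $f$ is the connecting map, and a direct chain-level computation should confirm that $f(m)(h) = [h,m]_{L} + [m,h]_{R}$, i.e.\ $f$ is exactly $dL^{0}$ followed by the relevant identification (note $M_{0} = Span_{k}([x,m]_{L} + [m,x]_{R})$ makes this formula natural).

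The main obstacle I anticipate is pinning down the correct middle term of the short exact sequence and verifying that its $Ext$ groups are genuinely $Hom(\Ih, HL^{*}(\Ih, M))$ rather than something more complicated; this requires checking that the induction/tensoring with $\Ih$ is exact and compatible with the $d_{0}$-versus-$d_{1}$ bookkeeping, so that the adjunction $Ext^{*}_{UL(\Ih)}(\Ih \otimes U(\Ih_{Lie})^{a}, M) \simeq Hom(\Ih, Ext^{*}_{UL(\Ih)}(U(\Ih_{Lie})^{a}, M)) = Hom(\Ih, HL^{*}(\Ih, M))$ holds. The secondary delicate point is the explicit identification of the connecting homomorphism $f$ at the chain level in the low degrees $i = 0, -1$; here I would compute directly with the Koszul-type resolution to match the formula for $f$, being careful that the left-Leibniz sign conventions (as in the displayed $dL^{n}$) reproduce exactly $[h,m]_{L} + [m,h]_{R}$ and not its negative or a permuted version. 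Everything else is formal homological algebra once the short exact sequence and the adjunction are in place.
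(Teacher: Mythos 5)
There is a genuine gap: the short exact sequence you build your argument on does not exist, and the long exact sequence you display does not follow from it. The paper's sequence is $0 \to U(\Ih_{Lie})^{a} \otimes \Ih \to UL(\Ih) \to U(\Ih_{Lie})^{s} \to 0$, in which $U(\Ih_{Lie})^{s}$ is the \emph{quotient} (it is $UL(\Ih)/Ker(d_{1})$) and the submodule $Ker(d_{1}) = \langle l_{h} + r_{h} \mid h \in \Ih \rangle$ is identified with the antisymmetric induced module $U(\Ih_{Lie})^{a} \otimes \Ih$ via $\bar{u} \otimes h \mapsto u(l_{h}+r_{h})$ (injectivity of this map is a nontrivial point, proved via the relation $r_{y}(l_{x}+r_{x})=0$ and $Ker(f_{1}) = Ker(d_{0} \otimes id)$). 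The decisive feature is that the \emph{middle} term is the free module $UL(\Ih)$: projectivity kills $Ext^{i}_{UL(\Ih)}(UL(\Ih), M)$ for $i \geq 1$, which is what produces the clean isomorphisms $Hom(\Ih, HL^{i}(\Ih,M)) \simeq Ext^{i}_{UL(\Ih)}(U(\Ih_{Lie})^{a}\otimes\Ih, M) \simeq Ext^{i+1}_{UL(\Ih)}(U(\Ih_{Lie})^{s}, M)$ for $i \geq 1$, and in low degrees the four-term sequence $0 \to Hom_{UL(\Ih)}(U(\Ih_{Lie})^{s},M) \to M \to Hom(\Ih, HL^{0}(\Ih,M)) \to Ext^{1}_{UL(\Ih)}(U(\Ih_{Lie})^{s},M) \to 0$ from which $Ker(f)$ and $Coker(f)$ are read off. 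In other words, the shift $i \mapsto i+1$ is a dimension-shifting consequence of the free middle term, not a ``connecting-homomorphism shift'' between $HL$-terms as you claim.

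Your proposed sequence $0 \to U(\Ih_{Lie})^{s} \to \Ih \otimes U(\Ih_{Lie})^{a} \to U(\Ih_{Lie})^{a} \to 0$ is impossible: the induced module (with the action on the $U(\Ih_{Lie})^{a}$ factor) is antisymmetric, i.e.\ every $r_{x}$ acts by zero, whereas in $U(\Ih_{Lie})^{s}$ one has $l_{x}\cdot 1 = \bar{x} = -(r_{x}\cdot 1)$; hence any $UL(\Ih)$-linear $\phi$ out of $U(\Ih_{Lie})^{s}$ into an antisymmetric module satisfies $l_{x}\cdot\phi(1) = \phi(l_{x}\cdot 1) = -\phi(r_{x}\cdot 1) = 0$ for all $x$, and since $U(\Ih_{Lie})$ is a domain this forces $\phi = 0$. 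Even granting such a sequence formally, with $U(\Ih_{Lie})^{s}$ as the submodule the long exact sequence would place $Ext^{i}$ (not $Ext^{i+1}$) of $U(\Ih_{Lie})^{s}$ between $Hom(\Ih, HL^{i})$ and $HL^{i+1}$, so no shift appears; and your fallback that ``for $i>0$ the outer $HL$ terms should vanish'' cannot be right, since the proposition is stated for an \emph{arbitrary} Leibniz algebra and bimodule, where $HL^{i}(\Ih,M)$ has no reason to vanish (the vanishing theorem, \Cref{fw}, is only invoked later under semisimplicity). Two ingredients of your plan are sound and do appear in the paper: the adjunction $Ext^{i}_{UL(\Ih)}(U(\Ih_{Lie})^{a}\otimes\Ih, M) \simeq Hom(\Ih, Ext^{i}_{UL(\Ih)}(U(\Ih_{Lie})^{a},M))$, and the chain-level identification of $f$, which in the paper is exactly the composite $m \mapsto \psi_{m}\circ f_{2}$, giving $f(m)(h) = (l_{h}+r_{h}).m = [h,m]_{L} + [m,h]_{R}$ (the $Coker$ identification then requires the five-lemma diagram chase). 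But without the free middle term $UL(\Ih)$ and the identification of $Ker(d_{1})$, the argument as proposed cannot produce the stated isomorphisms.
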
 

\begin{proof}
Let $M$ be an $\Ih$-bimodule, and
\[
\begin{split}
f : M & \longrightarrow Hom(\Ih, HL^{0}(\Ih, M)) \\
f(m)(h) & = \left[ h, m \right] _{L} + \left[ m, h \right] _{R}
\end{split}
\]
We first want to show that $Ext^{0}_{UL(\Ih)}(U(\Ih_{Lie})^{s}, M) = Ker (f)$. But by definition
\begin{equation*}
Ext^{0}_{UL(\Ih)}(U(\Ih_{Lie})^{s}, M) = Hom_{UL(\Ih)}(U(\Ih_{Lie})^{s}, M)
\end{equation*}
We then define the map:
\[
\begin{split}
ev : Hom_{UL(\Ih)}(U(\Ih_{Lie})^{s}, M) & \longrightarrow M \\
\varphi & \longmapsto \varphi(1)
\end{split}
\]
which is an isomorphism onto $Ker(f)$, of inverse:
\[
\begin{split}
\mu : Ker (f) & \longrightarrow Hom_{UL(\Ih)}(U(\Ih_{Lie})^{s}, M) \\
m & \longmapsto \varphi_{m} : (1 \mapsto m)
\end{split}
\]
This gives the degree zero equality of the proposition.\\
We now want to show that $Ext^{1}_{UL(\Ih)}(U(\Ih_{Lie})^{s}, M) = Coker (f)$.\\
Consider $UL(\Ih) \otimes \Ih$ as a left $UL(\Ih)$-module with the following action $\forall x \in \Ih, \forall r, s \in UL(\Ih)$:
\[
s.(r \otimes x) = sr \otimes x
\]

Define a homomorphism of left $UL(\Ih)$-modules by:
\[
\begin{split}
f_{1} : UL(\Ih) \otimes \Ih & \longrightarrow UL(\Ih) \\
1 \otimes h & \longmapsto l_{h} + r_{h}
\end{split}
\]

Then $f_{1}$ factors through $f_{2} : U(\Ih_{Lie})^{a} \otimes \Ih \longrightarrow UL(\Ih)$. Indeed we have the following commutative diagram:

\begin{equation}
\tag{D}
\begin{tikzcd}
UL(\Ih) \otimes \Ih \arrow[r, "f_{1}"] \arrow[d, "d_{0} \otimes id"'] & UL(\Ih) \\
U(\Ih_{Lie})^{a} \otimes \Ih \arrow[d] \arrow[ru, "f_{2}"'] \\
0
\end{tikzcd}
\end{equation}

and define $f_{2}(d_{0}(x) \otimes h) := f_{1}(x \otimes h)$ which is well-defined: If $x, y \in UL(\Ih)$ are such that $d_{0}(x) = d_{0}(y)$, then $f_{1}(x \otimes h) = f_{1}(y \otimes h)$. Indeed if $x-y \in Ker (d_{0})$, then $x = y + \bar{z}$ with $\bar{z} \in \langle r_{z}, z \in \Ih \rangle$. Therefore, the relation $r_{y}(l_{x} + r_{x}) = 0$ in $UL(\Ih)$ implies that  $f_{1}(x \otimes h) = f_{1}(y \otimes h)$. \\
We claim that $f_{2}$ is injective. This follows from the diagram (D) and the fact that $Ker (f_{1}) = Ker (d_{0} \otimes id)$. This therefore gives us the following short exact sequence:
\begin{equation*}
\begin{tikzcd}
0 \arrow[r] & U(\Ih_{Lie})^{a} \otimes \Ih \arrow[r, "f_{2}"] & UL(\Ih) \arrow[r] & Coker f_{2} \arrow[r] & 0
\end{tikzcd}
\end{equation*}
But by construction, $Im(f_{2})$ is the left ideal $\langle l_{x} + r_{x} \;|\;x \in \Ih \rangle$, which is equal to $Ker(d_{1})$ (see Section 1). This implies that $Coker(f_{2})$ is the quotient $UL(\Ih)/Ker(d_{1})$, that is $Im(d_{1})$, and the short exact sequence above becomes:
\begin{equation*}
\begin{tikzcd}
0 \arrow[r] & U(\Ih_{Lie})^{a} \otimes \Ih \arrow[r, "f_{2}"] & UL(\Ih) \arrow[r] & U(\Ih_{Lie})^{s} \arrow[r] & 0
\end{tikzcd}
\end{equation*}
This short exact sequence yields the following long exact sequence in cohomology: 
\[
\begin{split}
0 & \longrightarrow Hom_{UL(\Ih)}(U(\Ih_{Lie})^{s}, M) \longrightarrow Hom_{UL(\Ih)}(UL(\Ih), M) \longrightarrow Hom_{UL(\Ih)}(U(\Ih_{Lie})^{a} \otimes \Ih, M) \\
& \longrightarrow Ext^{1}_{UL(\Ih)}(U(\Ih_{Lie})^{s}, M) \longrightarrow Ext^{1}_{UL(\Ih)}(UL(\Ih), M) \longrightarrow Ext^{1}_{UL(\Ih)}(U(\Ih_{Lie})^{a} \otimes \Ih, M) \\
& \longrightarrow Ext^{2}_{UL(\Ih)}(U(\Ih_{Lie})^{s}, M) \longrightarrow\ldots
\end{split}
\]
Now, by noticing the obvious identification $Hom_{UL(\Ih)}(UL(\Ih), M) = M$, and the fact that, $UL(\Ih)$ being a free $UL(\Ih)$-module, it is projective and therefore $Ext^{1}_{UL(\Ih)}(UL(\Ih), M) = 0$, we can extract the following exact sequence:
\[
0 \rightarrow Hom_{UL(\Ih)}(U(\Ih_{Lie})^{s}, M) \rightarrow M \rightarrow Hom_{UL(\Ih)}(U(\Ih_{Lie})^{a} \otimes \Ih, M) \rightarrow Ext^{1}_{UL(\Ih)}(U(\Ih_{Lie})^{s}, M) \rightarrow 0
\]
To obtain the desired isomorphism, we want to relate it to the exact sequence we get from $f$:
\[
0 \longrightarrow Ker(f) \longrightarrow M \longrightarrow Hom(\Ih, HL^{0}(\Ih, M)) \longrightarrow Coker(f) \longrightarrow 0
\]
and conclude by using the 5-lemma.
We can send $M$ onto $M$ via the identity map. We then construct an isomorphism
\begin{equation*}
Hom_{UL(\Ih)}(U(\Ih_{Lie})^{a} \otimes \Ih, M) \longrightarrow Hom(\Ih, HL^{0}(\Ih, M))
\end{equation*}
Notice that since $U(\Ih_{Lie})^{a} \otimes \Ih$ is a quotient of $UL(\Ih) \otimes \Ih$, it is generated, as a $UL(\Ih)$-module, by the elements $1 \otimes h$, for $h \in \Ih$. We can now define a map:
\[
\begin{split}
Hom_{UL(\Ih)}(U(\Ih_{Lie})^{a} \otimes \Ih, M) & \longrightarrow Hom(\Ih, HL^{0}(\Ih, M)) \\
\varphi & \longmapsto \tilde{\varphi}
\end{split}
\]
where $\tilde{\varphi}(h) := \varphi(1 \otimes h)$, for $h \in \Ih$.
The image of $\tilde{\varphi}$ lies in $HL^{0}(\Ih, M)$, for: 
\[
\begin{split}
\left[ \tilde{\varphi}(h), h' \right]_{R} & = \left[ \varphi(1 \otimes h), h' \right]_{R} \\
& = \varphi(r_{h'}.(1 \otimes h)) \\
& = 0
\end{split}
\]
using the fact that $\varphi$ is a $UL(\Ih)$-morphism and the fact that we are considering the $UL(\Ih)$-module $U(\Ih_{Lie})^{a} \otimes \Ih$.

We can then construct its inverse, by:
\[
\begin{split}
Hom(\Ih, HL^{0}(\Ih, M)) & \longrightarrow Hom_{UL(\Ih)}(U(\Ih_{Lie})^{a} \otimes \Ih, M) \\
u & \longmapsto \varphi_{u}
\end{split}
\]
with $\varphi_{u} : \bar{x} \otimes h \mapsto x.u(h)$ where $\bar{x}$ denotes the class of $x \in UL(\Ih)$ in the quotient $U(\Ih_{Lie})^{a}$ (see Section 1).

This yields the following diagram:
\begin{equation*}
\begin{tikzcd}
M \arrow[d, equal] \arrow[r] 
	& Hom_{UL(\Ih)}(U(\Ih_{Lie})^{a} \otimes \Ih, M) \arrow[d, "\sim"] \arrow[r]
		& Ext_{UL(\Ih)}^{1}(U(\Ih_{Lie})^{s}, M) \arrow[d, "(*)"] \arrow[r]
			& 0 \arrow[r]  \arrow[d, equal]
				& 0 \arrow[d, equal]\\
M \arrow[r, "f"]
	& Hom(\Ih, HL^{0}(\Ih, M)) \arrow[r]
		& Coker(f) \arrow[r]
			& 0 \arrow[r]
				& 0
\end{tikzcd}
\end{equation*}
where the arrow $(*) : Ext_{UL(\Ih)}^{1}(U(\Ih_{Lie})^{s}, M) \longrightarrow Coker(f)$ is given by functoriality of the $Coker$.

To conclude, we just need to prove that this diagramm is commutative. It is sufficient to show that it is the case for the square:
\begin{equation*}
\begin{tikzcd}
M \arrow[d, equal] \arrow[r] 
	& Hom_{UL(\Ih)}(U(\Ih_{Lie})^{a} \otimes \Ih, M) \arrow[d, "\sim"] \\
M \arrow[r, "f"]
	& Hom(\Ih, HL^{0}(\Ih, M))
\end{tikzcd}
\end{equation*}
Notice that for the arrow $M \longrightarrow Hom_{UL(\Ih)}(U(\Ih_{Lie})^{a} \otimes \Ih, M)$ we identified
\begin{equation*}
M \simeq Hom_{UL(\Ih)}(UL(\Ih), M)
\end{equation*}
via the map $m \longmapsto (\psi_{m} : u \mapsto u.m)$. This arrow is therefore given by $\psi_{m} \longmapsto \psi_{m} \circ f_{2}$, that is:
\[
\begin{split}
\bar{u} \otimes x \longmapsto \psi_{m}(f_{2}(\bar{u} \otimes x)) & = \psi_{m}(f_{2}(d_{0}(u) \otimes x)) \\
	& = \psi_{m}(f_{1}(u \otimes x)) \\
	& = \psi_{m}(u(l_{x} + r_{x})) \\
	& = u(l_{x} + r_{x}).m
\end{split}
\]
Since $U(\Ih_{Lie})^{a} \otimes \Ih$ is generated as a $UL(\Ih)$-module by the elements $1 \otimes x$ for $x \in \Ih$, it is enough to check the commutativity of the diagram only on these elements. By explicitly writing the maps in question we get:

\begin{equation*}
\begin{tikzcd}
m = \psi_{m} \arrow[d, equal] \arrow[r, mapsto] 
	& (1 \otimes x \longmapsto (l_{x} + r_{x}).m) \arrow[d, mapsto] \\
m \arrow[r, mapsto, "f"] 
	& (x \longmapsto \left[ x, m \right]_{L} + \left[ m, x \right]_{R})
\end{tikzcd}
\end{equation*}
which by \Cref{equi} proves the commutativity of the square and therefore of the diagram.
The 5-lemma then tells us the arrow $(*)$ is an isomorphism, and we obtain the second isomorphism of the proposition.

To get the higher degree isomorphisms, notice that the long exact sequence in cohomology we found earlier goes as follow:
\begin{align*}
\ldots\rightarrow & Ext^{i}_{UL(\Ih)}(UL(\Ih), M) \longrightarrow Ext^{i}_{UL(\Ih)}(U(\Ih_{Lie})^{a} \otimes \Ih, M) \longrightarrow Ext^{i+1}_{UL(\Ih)}(U(\Ih_{Lie}), M) \\
& \longrightarrow Ext^{i+1}_{UL(\Ih)}(UL(\Ih), M) \rightarrow\ldots
\end{align*}
But $UL(\Ih)$ being a free $UL(\Ih)$-module, it is projective, hence 
\begin{align*}
Ext^{i}_{UL(\Ih)}(UL(\Ih), M) & = Ext^{i+1}_{UL(\Ih)}(UL(\Ih), M)\\
& = 0
\end{align*}
and this for all $i$. We thus obtain:
\[
0 \longrightarrow Ext^{i}_{UL(\Ih)}(U(\Ih_{Lie})^{a} \otimes \Ih, M) \longrightarrow Ext^{i+1}_{UL(\Ih)}(U(\Ih_{Lie}), M) \longrightarrow 0
\]
Now, in order to conclude, we use the fact that:
\[
Ext^{i}_{UL(\Ih)}(U(\Ih_{Lie})^{a} \otimes \Ih, M) = Hom( \Ih, Ext^{i}_{UL(\Ih)}(U(\Ih_{Lie})^{a}, M))
\]
which is obtained from the classical Hom/Tens adjunction, and the isomorphism given in Theorem (3.4) of \cite{loday}:
\[
Ext^{i}_{UL(\Ih)}(U(\Ih_{Lie})^{a}, M) \simeq HL^{i}(\Ih, M)
\]
This gives us all the promised isomorphisms, therefore concluding the proof.
\end{proof}

We can now compute the $Ext$ groups in the category of $\Ih$-bimodules and we will see that nontrivial $Ext^{1}$ groups appear whenever the degree shift from \Cref{prop23} is happening.

\begin{theorem} \label{main}
Let $\Ih$ be a finite dimensional simple Leibniz algebra over a field of characteristic zero $k$. All groups $Ext_{UL(\Ih)}^{2}(M, N)$ between simple finite dimensional $\Ih$-bimodules are zero, except $Ext^{2}_{UL(\Ih)}(\mathfrak{M}^{s}, \mathfrak{N}^{a})$, with $\mathfrak{M} \in \{ \mathfrak{Leib(h)}^{\star}, \Ih_{Lie}^{\star} \}$ and $\mathfrak{N} \in \{ \mathfrak{Leib(h)}, \Ih_{Lie} \}$,  which is one dimensional.\\
Moreover, we have that:
\begin{itemize}
\item $Ext^{1}_{UL(\Ih)}(\mathfrak{M}^{s}, k)$, and $Ext^{1}_{UL(\Ih)}(k, \mathfrak{N}^{a})$ are one dimensional, for $\mathfrak{M}\in \{ \mathfrak{Leib(h)}^\star, \Ih_{Lie}^\star \}$ and $\mathfrak{N} \in \{ \mathfrak{Leib(h)}, \Ih_{Lie} \}$;
\item $Ext^{1}_{UL(\Ih)}(M^{s}, N^{a}) \simeq Hom_{U(\Ih_{Lie})}(M, \widehat{N})$, where
\begin{equation*}
\widehat{N} := Coker(h:\; \; N \longrightarrow Hom(\Ih, N))\; \; \; \; h(n)(x):= \left[ x, n\right]_{L}
\end{equation*}
\item All other groups $Ext^{1}_{UL(\Ih)}(M, N)$ between simple finite-dimensional $\Ih$-bimodules $M$ and $N$ are zero.
\end{itemize}  
\end{theorem}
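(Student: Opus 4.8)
The plan is to run the two change-of-rings spectral sequences of \Cref{sspec} case by case, according to the symmetry type of the two simple bimodules involved. Since the simple finite-dimensional $\Ih$-bimodules are exactly the $M^{a}$ and $M^{s}$ with $M$ a simple $\Ih_{Lie}$-module, computing $Ext^{n}_{UL(\Ih)}(A,B)$ splits into four cases: an antisymmetric source is fed into (S1), a symmetric source into (S2). Three inputs drive every case. First, \Cref{fw} forces $HL^{q}(\Ih,X)=0$ for $q\ge 2$, and for $q\ge 1$ when $X$ is symmetric, so in each spectral sequence only the columns $q=0,1$ (and, after the shift of \Cref{prop23}, $q=2$) can survive. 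Second, Whitehead's Lemmas give $H^{1}(\Ih_{Lie},-)=H^{2}(\Ih_{Lie},-)=0$ on every finite-dimensional $\Ih_{Lie}$-module, which collapses almost all of the $E_{2}$-page in total degrees $1$ and $2$. Third, \Cref{prop23} rewrites the symmetric-source groups $Ext^{q}_{UL(\Ih)}(U(\Ih_{Lie})^{s},X)$ as $Ker(f)$, $Coker(f)$ and $Hom(\Ih,HL^{q-1}(\Ih,X))$ for $q=0$, $q=1$ and $q\ge 2$ respectively; the degree shift in the last expression is what ultimately produces the exceptional groups.

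The key computation I would isolate first is $HL^{1}(\Ih,N^{a})$ for $N$ a simple $\Ih_{Lie}$-module. Since the right action on $N^{a}$ is zero, $dL^{0}=0$, and the cocycle condition $dL^{1}\omega=0$ reads $\omega([x,y])=[x,\omega(y)]_{L}$, which is precisely $\Ih_{Lie}$-equivariance of $\omega\colon\Ih\to N$; hence $HL^{1}(\Ih,N^{a})\cong Hom_{\Ih_{Lie}}(\Ih,N)$. By complete reducibility, together with the fact that $\Ih_{Lie}$ is simple and $\mathfrak{Leib(h)}$ is a simple $\Ih_{Lie}$-module, one has $\Ih\cong\mathfrak{Leib(h)}\oplus\Ih_{Lie}$ as $\Ih_{Lie}$-modules, so Schur's lemma makes $Hom_{\Ih_{Lie}}(\Ih,N)$ one-dimensional exactly when $N\in\{\mathfrak{Leib(h)},\Ih_{Lie}\}$ and zero otherwise. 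Being one-dimensional over the perfect algebra $\Ih_{Lie}$, this group is automatically the trivial module, a fact I would use repeatedly. I would also record $HL^{0}(\Ih,N^{a})=N$ and $HL^{0}(\Ih,N^{s})=N^{\Ih_{Lie}}$.

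With these in hand the four cases fall out. For a symmetric target $N^{s}$, \Cref{fw} kills every $q\ge 1$ column, so both (S1) and (S2) degenerate to $H^{p}(\Ih_{Lie},-)$, and $Ext^{1}$, $Ext^{2}$ vanish by Whitehead, except that for $N=k$ the shifted $q=1$ term of (S2) contributes $Hom_{\Ih_{Lie}}(M,\Ih^{\star})$, giving the one-dimensional $Ext^{1}_{UL(\Ih)}(\mathfrak{M}^{s},k)$. For an antisymmetric target, (S1) with source $M^{a}$ yields $Ext^{1}(M^{a},N^{a})\cong Hom_{\Ih_{Lie}}(M,HL^{1}(\Ih,N^{a}))$, which by triviality of $HL^{1}$ is nonzero only for $M=k$, producing $Ext^{1}_{UL(\Ih)}(k,\mathfrak{N}^{a})$, while $Ext^{2}(M^{a},N^{a})=0$. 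Finally (S2) with symmetric source $M^{s}$ and antisymmetric target $N^{a}$ is the heart: \Cref{prop23} gives the columns $Ker(f)$, $Coker(f)$ and $Hom(\Ih,HL^{1}(\Ih,N^{a}))$ in degrees $q=0,1,2$, where for an antisymmetric target the map $f$ of \Cref{prop23} reduces to the map $h$ of the statement, so that $Ker(f)=N^{\Ih_{Lie}}$ and $Coker(f)=\widehat{N}$. For nontrivial $N$ the $q=0$ column vanishes identically, which, together with Whitehead, removes every $d_{2}$ and $d_{3}$ into the surviving spots, giving $Ext^{1}(M^{s},N^{a})\cong Hom_{U(\Ih_{Lie})}(M,\widehat{N})$ and $Ext^{2}(M^{s},N^{a})\cong Hom_{\Ih_{Lie}}(M,\Ih^{\star}\otimes HL^{1}(\Ih,N^{a}))$.

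The main obstacle is this last $Ext^{2}$ term. Using $HL^{1}(\Ih,N^{a})\cong Hom_{\Ih_{Lie}}(\Ih,N)$ and the decompositions $\Ih\cong\mathfrak{Leib(h)}\oplus\Ih_{Lie}$ and $\Ih^{\star}\cong\mathfrak{Leib(h)}^{\star}\oplus\Ih_{Lie}^{\star}$, Schur's lemma shows it is one-dimensional precisely when $N\in\{\mathfrak{Leib(h)},\Ih_{Lie}\}$ and $M\in\{\mathfrak{Leib(h)}^{\star},\Ih_{Lie}^{\star}\}$, and zero in all other cases; this is the sole surviving $Ext^{2}$. The remaining care is bookkeeping: checking that all relevant spectral-sequence differentials vanish (handled by Whitehead's killing of $H^{1}$, $H^{2}$ and by the vanishing of entire columns), and verifying that the computations through (S1) and (S2) agree on the trivial bimodule $k$, where $k^{a}=k^{s}$, so that $Ext^{1}(\mathfrak{M}^{s},k)$ and the general formula $Ext^{1}(M^{s},N^{a})\cong Hom_{U(\Ih_{Lie})}(M,\widehat{N})$ come out mutually consistent.
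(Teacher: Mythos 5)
Your proposal is correct and follows essentially the same route as the paper: the same two change-of-rings spectral sequences of \Cref{sspec}, the degree shift of \Cref{prop23} for symmetric sources, the Feldvoss--Wagemann vanishing (\Cref{fw}) together with Whitehead's lemmas to collapse the $E_{2}$-pages, and the decomposition $\Ih \simeq \mathfrak{Leib(h)} \oplus \Ih_{Lie}$ (and its dual) plus Schur's lemma to identify the surviving one-dimensional groups. The only deviations are cosmetic: you derive $HL^{1}(\Ih, N^{a}) \simeq Hom_{U(\Ih_{Lie})}(\Ih, N)$ directly from the cocycle condition where the paper cites Lemma 1.5 of \cite{FelWag}, you deduce triviality of this module from one-dimensionality and perfectness of $\Ih_{Lie}$ rather than from the cocycle identity, and you are somewhat more explicit than the paper about checking that the relevant $d_{2}$ and $d_{3}$ differentials vanish.
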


\begin{proof}
We will compute $Ext_{UL(\Ih)}^{*}(M, N)$ for every combination of finite-dimensional $\Ih$-bimodules $M$ and $N$ and reduce it to the Chevalley-Eilenberg cohomology of the simple Lie algebra $\Ih_{\rm Lie}$ in order to conclude.
\begin{itemize}

\item \textit{Case 1:} $M = N = k$ is the trivial $\Ih$-bimodule.\\
We apply \Cref{sspec} to $Y = X = k$. By \Cref{fw}, $HL^{q}( \Ih, k) = 0$ for $q \geq 1$, since $k$ being trivial, it is also symmetric. Therefore, we obtain:
\begin{equation*}
Ext_{UL(\Ih)}^{*}(k, k) \simeq H^{*}(\Ih_{Lie}, k)
\end{equation*}

\item \textit{Case 2:} $M = k$ is the trivial $\Ih$-bimodule, and $N$ is a nontrivial simple symmetric $\Ih$-bimodule. \\
We apply \Cref{sspec} to $Y = k$, and $X = N$. Once again by \Cref{fw}, we get:
\begin{align*}
Ext^{n}_{UL(\Ih)}(k, N^{s}) =  0 &&\text{for $n \geq 1$}
\end{align*}

\item \textit{Case 3:} $M = k$ is the trivial $\Ih$-bimodule, and $N$ is a nontrivial simple antisymmetric $\Ih$-bimodule. \\
We have:
\begin{align*}
HL^{q}(\Ih, N^{a}) & \simeq 0 && \text{for q > 1, by \Cref{fw}} \\
& \simeq Hom_{U(\Ih_{Lie})}(\Ih, N) && \text{for q = 1, by Lemma 1.5 of \cite{FelWag}} \\
& \simeq N && \text{for q = 0, since $N$ is antisymmetric}
\end{align*}
Since $N$ is a nontrivial simple antisymetric $\Ih$-bimodule, it is also a nontrivial simple $\Ih_{Lie}$-module, and therefore $H^{*}(\Ih_{Lie}, N) = 0$ by Whitehead's theorem. Now using \Cref{sspec}, we find:
\begin{align*}
Ext^{*}_{UL(\Ih)}(k, N^{a}) & \simeq H^{*-1}(\Ih_{Lie}, Hom_{U(\Ih_{Lie})}(\Ih, N))\\
& \simeq H^{*-1}(\Ih_{Lie}, k) \otimes Hom_{U(\Ih_{Lie})}(\Ih, N)
\end{align*}
The second isomorphism is given in \cite{Fuchs}, Theorem 2.1.8 pp. 74--75, or in \cite{HS}, Theorem 13.\\
Since $\Ih$ might not be a simple $\Ih_{Lie}$-module, we cannot just apply Schur's lemma to the group $Hom_{U(\Ih_{Lie})}(\Ih, N)$. But this is where the short exact sequence (2) comes in handy. As a sequence of left $\Ih_{Lie}$-modules it actually splits, yielding the decomposition
\begin{equation*}
\Ih = \mathfrak{Leib(h}) \oplus \Ih_{Lie}
\end{equation*}
and since $\Ih$ is a simple Leibniz algebra, this is the decomposition of $\Ih$ into simple $\Ih_{Lie}$-modules.\\
Now, since $N$ is also a simple $\Ih_{Lie}$-module, we get that if $N \simeq \mathfrak{Leib(h)}$ or $N \simeq \Ih_{Lie}$ (as a left $\Ih_{Lie}$-module), then
\begin{equation*}
H^{*-1}(\Ih_{Lie}, k) \otimes Hom_{U(\Ih_{Lie})}(\Ih, N) \simeq H^{*-1}(\Ih_{Lie}, k)
\end{equation*}
If this is not the case, then
\begin{equation*}
H^{*-1}(\Ih_{Lie}, k) \otimes Hom_{U(\Ih_{Lie})}(\Ih, N) \simeq 0
\end{equation*}

\item \textit{Case 4:} $M$ is a nontrivial simple antisymmetric $\Ih$-bimodule, and $N$ is a simple symmetric $\Ih$-bimodule.\\
Using \Cref{fw}, we have $HL^{q}(\Ih, N^{s}) = 0$ for $q \geq 1$. Moreover, because $HL^{0}(\Ih, N^{s}) = N^{\Ih}$ is a trivial $\Ih$-bimodule, and since we can identify $Hom(M, HL^{0}(\Ih, N^{s})) \simeq M^{\star} \otimes N^{\Ih}$ with the direct sum of dim($N^{\Ih}$) copies of $M^{\star}$ we find that $H^{p}(\Ih_{Lie}, Hom(M, HL^{0}(\Ih, N^{a}))) \simeq H^{p}(\Ih_{Lie}, M^{\star}) \oplus ... \oplus H^{p}(\Ih_{Lie}, M^{\star}) = 0$, since $M$ being a simple nontrivial $\Ih_{Lie}$-module, so is $M^{\star}$. Thus yielding:
\begin{equation*}
Ext^{*}_{UL(\Ih)}(M^{a}, N^{s}) = 0
\end{equation*}

\item \textit{Case 5:} $M$ is a nontrivial simple antisymmetric representation, and $N$ is simple and antisymmetric.\\
Here, \Cref{fw} apply again, and we have that $HL^{q}(\Ih, N^{a}) \neq 0$ only when $q \in \{0, 1\}$. We check that $HL^{1}(\Ih, N^{a})$ is a trivial left $\Ih$-module. By definition of the chain complex defining Leibniz cohomology, we have that $CL^{1}(\Ih, N^{a}) = Hom(\Ih, N)$. Now for a morphism $\varphi \in Hom(\Ih, N)$ to be annihilated by the differential $dL^{1}$ means satisfying:
\begin{align*}
dL^{1}\varphi(x, y) := \left[ x, \varphi(y) \right]_{L} - \varphi(\left[ x, y \right]) = 0 && \forall x, y \in \Ih, 
\end{align*}
which is exactly to say that the left action of $\Ih$ on the module $Hom(\Ih, N)$ is trivial. Therefore, the same arguments used in Case 4 still apply, and we get that $E^{pq}_{2} = 0$ for $q > 0$, and:
\begin{equation*}
Ext^{*}_{UL(\Ih)}(M^{a}, N^{a}) = 0
\end{equation*}

\item \textit{Case 6:} $M$ is a nontrivial simple symmetric representation, and $N = k$ is the trivial $\Ih$-bimodule.\\
We apply \Cref{prop23} to $k$ to find:
\begin{align*}
Ext^{i}_{UL(\Ih)}((U(\Ih_{Lie})^{s}, k) & \simeq 0 && \text{if i > 1} \\
& \simeq \Ih^\star && \text{if i = 1} \\
& \simeq k && \text{if i = 0}
\end{align*}
because since in this case, the map $f$ in \Cref{prop23} is zero. We can now plug this in the second spectral sequence of \Cref{sspec}, with $X = k$, and $Z = M$, to obtain:
\begin{align*}
Ext^{*}_{UL(\Ih)}(M^{s}, k) & \simeq H^{*-1}(\Ih_{Lie}, Hom(M, \Ih^\star)) \\
& \simeq H^{*-1}(\Ih_{Lie}, k) \otimes Hom_{U(\Ih_{Lie})}(M, \Ih^\star)
\end{align*}

Using the same arguments as in Case 3, we get that if $M \simeq \mathfrak{Leib(h)}^\star$ or $M \simeq \Ih_{Lie}^\star$, then
\begin{equation*}
H^{*-1}(\Ih_{Lie}, k) \otimes Hom_{U(\Ih_{Lie})}(M, \Ih^\star) \simeq H^{*-1}(\Ih_{Lie}, k)
\end{equation*}
If this is not the case, then
\begin{equation*}
H^{*-1}(\Ih_{Lie}, k) \otimes Hom_{U(\Ih_{Lie})}(M, \Ih^\star) \simeq 0
\end{equation*}

\item \textit{Case 7:} $M$ and $N$ are both simple nontrivial symmetric $\Ih$-bimodules.\\
Applying \Cref{prop23} to $N^{s}$, and because $N$ is a symmetric $\Ih$-bimodule, we find that 
\begin{align*}
Ext^{i}_{UL(\Ih)}(U(\Ih_{Lie})^{s}, N^{s}) & \simeq 0 && \text{i $\geq$ 1} \\
& \simeq N && \text{if i = 0}
\end{align*}
Now using the second spectral sequence of \Cref{sspec}, we get:
\begin{align*}
Ext^{*}_{UL(\Ih)}(M^{s}, N^{s}) & \simeq H^{*}(\Ih_{Lie}, Hom(M, N)) \\
& \simeq H^{*}(\Ih_{Lie}, k) \otimes Hom_{\Ih_{Lie}}(M, N)
\end{align*}
Once again, since $M$ and $N$ are simple $\Ih_{Lie}$-modules, this vector space is nonzero only if $M \simeq N$, in which case it is isomorphic to $H^{*}(\Ih_{Lie}, k)$.

\item \textit{Case 8:} $M$ is a simple nontrivial symmetric $\Ih$-bimodule, and $N$ is a simple nontrivial antisymmetric $\Ih$-bimodule.\\
By \Cref{prop23}, we have:
\begin{align*}
Ext^{i}_{UL(\Ih)}(U(\Ih_{Lie})^{s}, N^{a}) & \simeq 0 && \text{for i > 2}\\
& \simeq Hom(\Ih, Hom_{U(\Ih_{Lie})}(\Ih, N)) && \text{for i = 2}\\
& \simeq Coker(h) && \text{for i = 1}\\
& \simeq Ker(h) && \text{for i = 0}
\end{align*}
The map $h$ appearing here (defined in the statement of the theorem) is due to the fact that $N$ is an antisymmetric $\Ih$-bimodule. Moreover, since $N$ is supposed to be nontrivial and $h$ is a $\Ih$-module homomorphism, $Ker(h) = 0$. Therefore we have that $E^{pq}_{2} = 0$ for $q > 2$ and $q = 0$. For the remaining values of $q$, we have isomorphisms
\begin{align*}
E^{p1}_{2} & \simeq H^{p}(\Ih_{Lie}, Hom(M, \widehat{N}))\\
& \simeq H^{p}(\Ih_{Lie}, k) \otimes Hom_{U(\Ih_{Lie})}(M, \widehat{N})
\end{align*}
and
\begin{align*}
E^{p2}_{2} & \simeq H^{p}(\Ih_{Lie}, Hom(M, Hom(\Ih, Hom_{U(\Ih_{Lie})}(\Ih, N))))\\
& \simeq H^{p}(\Ih_{Lie}, k) \otimes Hom_{U(\Ih_{Lie})}(M, Hom(\Ih, Hom_{U(\Ih_{Lie})}(\Ih, N)))
\end{align*}
The first isomorphism tells us that $Ext^{1}_{UL(\Ih)}(M^{s}, N^{a}) \simeq Hom_{U(\Ih_{Lie})}(M, \widehat{N})$.\\
To use the second isomorphism, we need to proceed as in Case 6, since $\Ih$ is not a priori a simple $\Ih_{Lie}$-module, although with some more cases.

	\begin{itemize}
	\item If $N \not\simeq \mathfrak{Leib(h)}$ or $N \not\simeq \Ih_{Lie}$:\\
	Then $Hom_{U(\Ih_{Lie})}(\Ih, N) \simeq 0$, yielding $E^{p2}_{2} = 0$
	\item If $N \simeq \mathfrak{Leib(h)}$ or $N \simeq \Ih_{Lie}$:\\
	Then $Hom_{U(\Ih_{Lie})}(\Ih, N) \simeq k$, and we have
	\begin{align*}
		E^{p2}_{2} & \simeq H^{p}(\Ih_{Lie}, Hom(M, \Ih^{\star}))\\
		& \simeq H^{p}(\Ih_{Lie}, k) \otimes Hom_{U(\Ih_{Lie})}(M, \Ih^{\star})
	\end{align*}
	Now we need to do the same work for $Hom_{U(\Ih_{Lie})}(M, \Ih^{\star})$. Since $\Ih$ is a simple Leibniz algebra, $\Ih_{Lie}$ is a simple Lie algebra and is isomorphic as an $\Ih_{Lie}$-module to its dual $\Ih_{Lie}^{\star}$ via the Killing form. Moreover, the exactness of the functor $Hom(-, k)$ gives us the short exact sequence
	\begin{equation*}
	0 \longrightarrow \Ih _{Lie}^{\star} \longrightarrow \Ih^{\star} \longrightarrow  \mathfrak{Leib(h)}^{\star} \longrightarrow 0
	\end{equation*}
	and the decomposition of $\Ih^{\star} = \mathfrak{Leib(h})^{\star} \oplus \Ih_{Lie}^{\star}$ as a left $\Ih_{Lie}$-module, since the $\mathfrak{Leib(h)}$ is a simple $\Ih_{Lie}$-module and therefore so is its dual.

	We therefore are in one of the following cases:
			\begin{itemize}
			\item If $M \not\simeq \mathfrak{Leib(h)}^{\star}$ or $M \not\simeq \Ih_{Lie}^{\star}$:\\
			Then $Hom_{U(\Ih_{Lie})}(M, \Ih^{\star}) \simeq 0$, and $E^{p2}_{2} = 0$.
			\item If $M \simeq \mathfrak{Leib(h)}^{\star}$ or $M \simeq \Ih_{Lie}^{\star}$:\\
			Then $Hom_{U(\Ih_{Lie})}(M, \Ih^{\star}) \simeq k$, and we get
			\begin{align*}
			E^{p2}_{2} & \simeq H^{p}(\Ih_{Lie}, k) \otimes Hom_{U(\Ih_{Lie})}(M, \Ih^{\star})\\
			& \simeq H^{p}(\Ih_{Lie}, k)
			\end{align*}
			\end{itemize}
	\end{itemize}	
\end{itemize}
In order to get the promised vanishing of the Ext groups, we just use the fact that, since $\Ih_{Lie}$ is simple:
\begin{equation*}
H^{1}(\Ih_{Lie}, k) \simeq H^{2}(\Ih_{Lie}, k) \simeq 0
\end{equation*}
and this concludes our proof.
\end{proof}

This theorem also allows us to compute the $Ext$ dimension of the category, denoted by $L(\mathfrak{h})$ in \cite{LP}, of finite dimensional bimodules (but here in the left setting). First we need to make sure that the $Ext$ groups are well defined in this category. To do so we will use more general results from Category Theory.

Notice that $L(\mathfrak{h})$ is an {\it essentially small} abelian category. This means by definition that 
the class of isomorphism classes of objects is a set. This property implies that every one of its categories of fractions exists and is essentially small as well (see for example Proposition 5.2.2 of \cite{B}). Therefore the derived category of $\mathfrak{L(h)}$ is well defined.

All there is to do now is to see that one can relate the $Ext$ groups with morphisms in the derived category. For this, we refer the reader to \textsection 5 of Chapter III, and \textsection 6.14 of Chapter III in \cite{GM}. In \textsection 5 the authors define the $Ext$ functor in terms of the $Hom$ in the derived category, and they show in \textsection 6.14 that it is equivalent to the derived functor definition.

Therefore, the $Ext$ groups are well defined in the category $L(\mathfrak{h})$ and we can now state the following corollary to \Cref{main}:

\begin{corollary}
For $i \in \{ 0, 1, 2 \}$, the natural transformation $Ext^{i}_{L(\mathfrak{h})} \longrightarrow Ext^{i}_{UL(\Ih)}$ induced by the inclusion functor from $L(\mathfrak{h})$ to the category of $UL(\Ih)$-modules is an isomorphism. Moreover, since $Ext^{i}_{L(\mathfrak{h})} = 0$ for $i > 2$, the $Ext$ dimension of $L(\mathfrak{h})$ is $2$.
\end{corollary}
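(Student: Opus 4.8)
The plan is to regard $L(\mathfrak{h})$ as a full abelian subcategory of the category $\mathcal{B}$ of all $UL(\Ih)$-modules and to compare the two $Ext$ theories through the intrinsic description of $Ext$ in $L(\mathfrak{h})$ as Yoneda $Ext$ (equivalently, as $Hom$ in the derived category, which is exactly the description set up above). The decisive structural point is that a sub-, quotient-, or extension-module of finite-dimensional $UL(\Ih)$-modules is again finite-dimensional, so $L(\mathfrak{h})$ is a \emph{Serre}, in particular extension-closed, subcategory of $\mathcal{B}$, with kernels and cokernels computed as in $\mathcal{B}$. For an extension-closed full abelian subcategory the comparison map on Yoneda $Ext$ is an isomorphism in degrees $0$ and $1$ and a monomorphism in degree $2$; this settles $i=0,1$ directly and reduces $i=2$ to surjectivity.

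For surjectivity in degree $2$ I would argue first on simple objects, where by \Cref{main} the target $Ext^{2}_{UL(\Ih)}(\mathfrak{M}^{s}, \mathfrak{N}^{a})$ is one-dimensional and every other $Ext^{2}$-group between simples vanishes. It therefore suffices to produce one non-split \emph{finite-dimensional} $2$-extension of $\mathfrak{M}^{s}$ by $\mathfrak{N}^{a}$. I would build it as the Yoneda splice of the two nonzero one-dimensional classes in $Ext^{1}_{UL(\Ih)}(\mathfrak{M}^{s}, k)$ and $Ext^{1}_{UL(\Ih)}(k, \mathfrak{N}^{a})$ furnished by \Cref{main}, i.e. from short exact sequences $0 \to k \to E \to \mathfrak{M}^{s} \to 0$ and $0 \to \mathfrak{N}^{a} \to F \to k \to 0$ with $E,F$ finite-dimensional, giving $0 \to \mathfrak{N}^{a} \to F \to E \to \mathfrak{M}^{s} \to 0$; one checks that its class maps to a generator of $Ext^{2}_{UL(\Ih)}(\mathfrak{M}^{s}, \mathfrak{N}^{a})$. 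With the monomorphism from Step~1 this yields the isomorphism in degree $2$ on simples, and propagation to arbitrary finite-dimensional modules is a dévissage: long exact sequences attached to a composition series in each variable, together with the isomorphisms in degrees $\le 2$ and the degree-$\ge 3$ vanishing obtained next, let the five lemma upgrade the comparison to an isomorphism in degree $2$ for all objects.

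The remaining, and I expect the main, difficulty is the intrinsic vanishing $Ext^{i}_{L(\mathfrak{h})}=0$ for $i>2$; this cannot come from the comparison map alone, since $Ext^{i}_{UL(\Ih)}$ does \emph{not} vanish (already $Ext^{*}_{UL(\Ih)}(k,k)\simeq H^{*}(\Ih_{Lie},k)$ is nonzero in degree $3$), the subtlety being that Yoneda $Ext$ inside $L(\mathfrak{h})$ only sees extensions with finite-dimensional terms. I would first record the elementary dévissage lemma: in a finite-length abelian category, if $Ext^{m}(S,T)=0$ for all simple $S,T$ and all $m\ge 3$, then $Ext^{m}$ vanishes on all objects for $m\ge 3$ (induction on total length via the long exact sequences, which relate the three terms in the same degree). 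It then remains to kill $Ext^{m}_{L(\mathfrak{h})}(S,T)$ for simple $S,T$ and all $m\ge 3$. Since every such class is a Yoneda product of classes of degree $1$ and $2$, I would analyse the $Ext$-quiver of the simples read off from \Cref{main}: the unique degree-$2$ arrows run from a special symmetric simple $\mathfrak{M}^{s}$ to a special antisymmetric simple $\mathfrak{N}^{a}$, so such an arrow is forced to be isolated (nothing maps into a nontrivial symmetric simple and nothing leaves a nontrivial antisymmetric simple), while the degree-$1$ arrows admit no path of length three; hence no composable chain of total degree $\ge 3$ joins two simples, and all the relevant products vanish. The technical heart here is to reduce the \emph{arbitrary} finite-dimensional intermediate modules occurring in these splices to their simple composition factors, which I would carry out by an induction on their length using the long exact sequences and the degree $\le 1$ comparison isomorphisms, so that the simple-to-simple vanishing forces $Ext^{m}_{L(\mathfrak{h})}(S,T)=0$. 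Finally, Ext-dimension $\le 2$ follows from this vanishing and Ext-dimension $\ge 2$ from the nonzero group $Ext^{2}_{L(\mathfrak{h})}(\mathfrak{M}^{s}, \mathfrak{N}^{a})$, so the Ext-dimension of $L(\mathfrak{h})$ equals $2$.
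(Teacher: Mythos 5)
Your handling of $i \in \{0,1,2\}$ runs essentially parallel to the paper's proof: the paper also obtains $i=0,1$ from the fact that $L(\mathfrak{h})$ is a full, extension-closed subcategory with the same exact structure, reduces $i=2$ to simple objects via the short exact sequence (3), and realizes the one-dimensional $Ext^{2}_{UL(\Ih)}(\mathfrak{M}^{s},\mathfrak{N}^{a})$ by a finite-dimensional $2$-extension spliced through the trivial module $k$. The one place where your direction differs is also where you leave a hole: you splice the two nonzero classes of $Ext^{1}_{UL(\Ih)}(\mathfrak{M}^{s},k)$ and $Ext^{1}_{UL(\Ih)}(k,\mathfrak{N}^{a})$ and say ``one checks'' the result is a generator, but the nonvanishing of this Yoneda product is precisely the nontrivial point. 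The paper argues in the opposite direction: it takes a generator of $Ext^{2}_{UL(\Ih)}(\mathfrak{M}^{s},\mathfrak{N}^{a})$, cuts the representing four-term sequence at $Im(\varphi)$, and argues that this middle term must be both symmetric and antisymmetric, hence trivial; so the generator \emph{is} such a splice, and since both $Ext^{1}$ groups are one-dimensional, every splice of nonzero classes agrees with it up to a nonzero scalar. You should either import that decomposition argument or verify the nonvanishing of the product by hand; as written, your surjectivity step is incomplete (and your five-lemma propagation to non-simple objects quietly also needs degree-$3$ monomorphy of the comparison map, which deserves a word).

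For $i>2$ your route genuinely departs from the paper — the paper simply observes that all higher $Ext_{UL(\Ih)}$ between simples are built from $H^{*}(\Ih_{Lie},k)$ and kills these in the finite-dimensional subcategory by Weyl's semisimplicity of finite-dimensional $\Ih_{Lie}$-modules — and here your argument has a genuine gap. A degree-$m$ Yoneda class between simples splices into $1$- and $2$-classes through \emph{arbitrary} finite-dimensional intermediates $K$, and since $L(\mathfrak{h})$ does not have enough projectives you cannot invoke the finite-dimensional-algebra fact that $Ext^{m}(S,T)\neq 0$ forces a length-$m$ path in the $Ext$-quiver of simples; your proposed induction on the length of $K$ stalls exactly in the case where the $Ext^{1}$-class does not lift into a submodule $K'\subset K$ \emph{and} the higher class is not pulled back from $K/K'$ — the long exact sequences give no decomposition of the product there, and the ``degree $\le 1$ comparison isomorphisms'' are the wrong tool in any case (you need degree $\le 2$). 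The argument can be repaired along your own lines: once the degree-$2$ comparison is an isomorphism for \emph{all} finite-dimensional pairs, naturality of the long exact sequences shows that a class $\xi_{2}\cdot\alpha \in Ext^{3}_{L(\mathfrak{h})}(S,T)$ vanishes if and only if its image in $Ext^{3}_{UL(\Ih)}(S,T)$ does (so the degree-$3$ comparison is injective); by the computations in Theorem \ref{main}, $Ext^{3}_{UL(\Ih)}(S,T)\neq 0$ only when $T$ is trivial or symmetric, and for such $T$ one has $Ext^{2}_{UL(\Ih)}(U,T)=0$ for every simple $U$ (the unique nonzero $Ext^{2}$ between simples has antisymmetric target), hence $Ext^{2}_{UL(\Ih)}(K,T)=0$ for all finite-dimensional $K$ by dévissage, so every such product vanishes; degrees $m\ge 4$ then follow by induction, since any $m$-class factors through $Ext^{m-1}_{L(\mathfrak{h})}(K,T)=0$. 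This completed version is more laborious than the paper's Weyl-theorem shortcut, but it does make precise a step the paper itself only sketches.
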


\begin{proof}
For finite dimensional bimodules $M$ and $N$, it is clear that $Hom_{L(\mathfrak{h})}(M, N) = Hom_{UL(\Ih)}(M, N)$, as well as $Ext^{1}_{L(\mathfrak{h})}(M, N) = Ext^{1}_{UL(\Ih )}(M, N)$. This gives us the $i \in \{0, 1 \} $ cases.

For $i=2$, because of the short exact sequence (3), it is enough to consider the case when $M$ and $N$ are simple objects. In this case \Cref{main} tells us that $Ext^{2}_{UL(\Ih)}(M^{s}, N^{a}) \neq 0$ only when $M \in \{ \mathfrak{Leib(h)}^{\star}, \Ih_{Lie}^{\star} \}$ and $N \in \{ \mathfrak{Leib(h)}, \Ih_{Lie} \}$. When this Ext group is zero, then $Ext^{2}_{L(\mathfrak{h})} \longrightarrow Ext^{2}_{UL(\Ih)}$ is obviously an isomorphism.

When $Ext^{2}_{UL(\Ih)}(M^{s}, N^{a}) \neq 0$, then it is one dimensional. This means that we only have to produce a non trivial two-fold extension $0 \longrightarrow N^{a} \longrightarrow E \longrightarrow F \longrightarrow M^{s} \longrightarrow 0$ with $\dim(E),\dim(F)<\infty$ to conclude.

Since $Ext^{2}_{UL(\Ih)}(M^{s}, N^{a})$ is one dimensional, then we can select one of its generators. It is the equivalence class of an exact sequence 
\begin{equation*}
\begin{tikzcd}
0 \arrow[r] & N^{a} \arrow[r] & E \arrow[r, "\varphi"] & F \arrow[r] & M^{s} \arrow[r] & 0
\end{tikzcd}
\end{equation*}
We can split this sequence into two exact sequences of length $1$, $0 \longrightarrow N^{a} \longrightarrow E \longrightarrow Im(\varphi) \longrightarrow 0$ and $0 \longrightarrow Im(\varphi) \longrightarrow F \longrightarrow M^{s} \longrightarrow 0$.

Since the sequence represents a generator of $Ext^{2}_{UL(\Ih)}(M^{s}, N^{a})$, the $1$-fold exact sequences cannot split, i.e. represent trivial classes in the corresponding $Ext^{1}_{UL(\Ih)}$. But by \Cref{main} again, for this to be the case, they must be in $Ext^{1}_{UL(\Ih)}(M^{s}, k )$ and $Ext^{1}_{UL(\Ih)}(k, N^{a})$ respectively, since the bimodule $Im(\varphi)$ must be both symmetric and antisymmetric (else the extensions are split, again using \Cref{main}).

But we know that $Ext^{1}_{UL(\Ih)}(M^{s}, k ) = Ext^{1}_{L(\mathfrak{h})}(M^{s}, k )$, and $Ext^{1}_{UL(\Ih)}(k, N^{a}) = Ext^{1}_{L(\mathfrak{h})}(k, N^{a})$. This means that $E$ and $F$ must be of finite dimension, allowing us to conclude.

The last step is to show that $Ext^{i}_{L(\mathfrak{h})}(M, N) = 0$ for $i \geq 3$. Note that this is not true for the whole category of $UL(\Ih)$-bimodules. But we have seen in Theorem \ref{main} that all higher $Ext$ groups come from higher Chevalley-Eilenberg cohomology 
$H^*(\Ih_{Lie},k)=Ext^*_{U\Ih_{Lie}}(k,k)$ of the simple Lie algebra $\Ih_{Lie}$. We claim that these $Ext$ groups vanish for $*>0$ in the subcategory (of finite dimensional $\Ih_{Lie}$-modules of the subcategory) of finite dimensional $UL(\Ih)$-bimodules. Indeed, for a simple Lie algebra $\Ih$, the category of finite-dimensional $\Ih_{Lie}$-modules is semisimple by Weyl's theorem (see for example Theorem 7.8.11 in \cite{Wei}). 
\end{proof}

\end{document}